\newtheorem{teo}{Theorem}[section]
\newtheorem{prop}[teo]{Proposition}
\newtheorem{lema}[teo]{Lemma}
\newtheorem{obs}[teo]{Remark}
\newtheorem{defnc}[teo]{Definition}
\newcommand{\C}{{\mathbb C}}
\newcommand{\R}{{\mathbb R}}
\newcommand{\Z}{{\mathbb Z}}
\newcommand{\N}{{\mathbb N}}
\newcommand{\fol}{{\mathcal F}}
\newcommand{\calU}{{\mathcal U}}
\newcommand{\diff}{{{\rm Diff}\, ({\mathbb C}, 0)}}
\newcommand{\fdiff}{{\widehat{\rm Diff}\, ({\mathbb C}, 0)}}
\newcommand{\tilf}{{\widetilde{\mathcal F}}}
\newcommand{\om}{{\omega}}
\begin{document}

\title[Pseudogroups and Topology of Leaves]{Generic pseudogroups on $(\C ,0)$ and the topology of leaves}

\author{J.-F. Mattei, \hspace{0.4cm} J. C. Rebelo \hspace{0.2cm} \& \hspace{0.2cm} H. Reis}
\address{}
\thanks{}

\begin{abstract}
We show that generically a pseudogroup generated by holomorphic diffeomorphisms defined about $0 \in \C$ is free in the sense
of pseudogroups even if the class of conjugacy of the generators is fixed. This result has a number of consequences on the topology
of leaves for a (singular) holomorphic foliation defined on a neighborhood of an invariant curve. In particular in the classical and simplest case arising
from local foliations possessing a unique separatrix that is given by a cusp of the form $\{y^2-x^{2k+1}=0\}$, our results
allow us to settle the problem of showing that a generic foliation possesses
only countably many non-simply connected leaves and that this countable set is, indeed, infinite.
\end{abstract}

\maketitle

\section{Introduction}

This paper is motivated by several difficulties concerning to greater or lesser extent the ``topology of leaves'' that are encountered
in the study of
some well-known problems about (singular) holomorphic foliations. Yet most of these problems are essentially concerned with pseudogroups
generated by certain local holomorphic diffeomorphisms defined on a neighborhood of $0 \in \C$. For this reason, we shall begin our discussion
by stating our results in this context. First
consider the group $\diff$ of germs of diffeomorphisms and let this group be
equipped with the {\it analytic topology} introduced by Takens \cite{takens}. The precise definition of this topology will be given in Section~2, for the time being it suffices
to know that it possesses the Baire property. Here we remind the reader that a $G_{\delta}$-dense set, sometimes also called
a residual set, is nothing but a countable intersection of open and dense sets in a Baire space.
All the ``generic results'' stated in this part of the Introduction concern $G_{\delta}$-dense sets for this topology.
It is however worth pointing out that, once they are established, it is easy
to derive the ``generic'' character in other contexts by means of ideas similar to those employed in Section~2
(in particular in  suitable topologies involving the coefficients of their Taylor series at $0 \in \C$ and/or in the
sense of measure).
Next consider a $k$-tuple of local holomorphic diffeomorphisms $f_1, \ldots ,f_k$
fixing $0 \in \C$. The first theorem proved here states that we can perturb the $f_i$'s
{\it without altering their classes of holomorphic conjugacy in $\diff$}
so that the group they generate is the free product of the corresponding cyclic groups. The reason for keeping the conjugacy
class fixed will be clear
when we shall discuss the applications to singular foliations. For the time being, note only that this condition is
equivalent to preserving the order of a diffeomorphism
if this order is finite. Also, in the case of a diffeomorphism having a hyperbolic fixed point at $0 \in \C$, the
condition amounts to preserving the corresponding multiplier. Next let $(\diff)^k$ denote the product of $k$-copies
of $\diff$ endowed with the product analytic topology. Then we have:

\vspace{0.2cm}

\noindent {\bf Theorem A}. {\sl Suppose that $f_1, \ldots ,f_k$ as above are such that none of the $f_i$'s has finite order
(i.e. $f_i^j \neq {\rm id}$ for every $j \in \N^{\ast}$). Then there exists a $G_{\delta}$-dense set $\mathcal{V} \subset (\diff)^k$
such that, whenever $(h_1, \ldots ,h_k) \in \mathcal{V}$ the group generated by $h_1^{-1}\circ f_1 \circ h_1,
\ldots , h_k^{-1}\circ f_k \circ h_k$ induces a free group in $\diff$.
Furthermore, there is a fixed neighborhood $V$ of $0 \in \C$ such that, denoting by $\Gamma^h$ the pseudogroup of mappings
defined on $V$ by $h_1^{-1}\circ f_1 \circ h_1, \ldots , h_k^{-1}\circ f_k \circ h_k$, every element of $\Gamma^h$ coinciding with the
identity on a non-empty open set must coincide with the identity on all of its domain of definition in $V$.

Finally there are infinitely many points $p_1, p_2, \ldots$ contained in $V$ such that $p_i$ is a hyperbolic fixed point of some element
$g_i$ of $\Gamma^h$ and the orbits under $\Gamma^h$  of $p_i$ and $p_j$ are disjoint provided that $i\neq j$.}

\vspace{0.2cm}

The assumption that the order of each $f_i$ is infinite does not constitute a limitation of our methods.
In fact, let $G_i$ be the cyclic group generated
by $f_i$ (which may or may not be infinite). Theorem~B below is formally a generalization of Theorem~A whereas our methods actually
yield a unified proof of both statements.

\vspace{0.2cm}

\noindent {\bf Theorem B}. {\sl Let $f_1, \ldots ,f_k$ and $G_1, \ldots , G_k$ be as above.
Then there exists a $G_{\delta}$-dense set $\mathcal{V} \subset (\diff)^k$
such that, whenever $(h_1, \ldots ,h_k) \in \mathcal{V}$ the group generated by $h_1^{-1}\circ f_1 \circ h_1,
\ldots , h_k^{-1}\circ f_k \circ h_k$ induces a group in $\diff$ that is isomorphic to the free product
$$
G_1 \ast \cdots \ast G_k \, .
$$
\noindent Furthermore, there is a fixed neighborhood $V$ of $0 \in \C$ such that, denoting by $\Gamma^h$ the pseudogroup of mappings
defined on $V$ by $h_1^{-1}\circ f_1 \circ h_1, \ldots , h_k^{-1}\circ f_k \circ h_k$, every element of $\Gamma^h$ coinciding with the
identity on a non-empty open set must coincide with the identity on all of its domain of definition in $V$.

Finally there are infinitely many points $p_1, p_2, \ldots$ contained in $V$ such that $p_i$ is a hyperbolic fixed point of some element
$g_i$ of $\Gamma^h$ and the orbits under $\Gamma^h$  of $p_i$ and $p_j$ are disjoint provided that $i\neq j$.}

\vspace{0.2cm}

For the last part of the preceding statements, we note that the existence of a hyperbolic fixed point for the pseudogroup arising from
a ``non-solvable'' group has been known since \cite{scherba}, \cite{Frank1} whereas the existence of two or more hyperbolic
fixed points with disjoint orbits was not previously settled. It appears here as a consequence of our theory of
generic pseudogroups.

It is natural to expect the preceding statements to have consequences on the topology of the leaves of a foliation on a neighborhood
of an invariant curve or on a neighborhood of a singular point. In this paper we shall content ourselves of providing an answer to the
long-standing question of nilpotent singularities leaving a cusp of the form $\{y^2 + x^{2n+1} =0\}$ invariant.
By a small abuse of language, by a cusp of the form $\{y^2 + x^{2n+1} =0 \}$ it will always be meant
a (local) curve analytically equivalent to the cusp in question.
This choice will help us to explain most of the relevant ideas without making the discussion too technical.
To begin with consider a singular foliation $\fol$ defined about
$(0,0) \in \C^2$ by a nilpotent $1$-form $ydx +\cdots$ and possessing a cusp $\{y^2 + x^{2n+1} =0 \}$ as its unique separatrix.
It then follows that the linear part of $\fol$ at the origin is nilpotent (and non-zero). This much studied class
of singularities corresponds to Arnold's singularities of type $A^{2k+1}$.
Whereas several works were devoted to these nilpotent singularities, and in particular to the description
of suitable normal forms (cf.  \cite{rennes1}, \cite{poland} and references therein), the question about the topology
of most leaves for the ``generic foliation'' remained unsettled. Our Theorem~C below states that for a generic foliation
in the class $A^{2k+1}$ there are only a countable (infinite) number of leaves that are not simply connected.

Generic theorems about foliations, as it is the case of Theorem~C, are more commonly expressed in terms of the Krull
topology since the corresponding formulations automatically bear a meaning in terms of ``generic coefficients'' for
the differential equation in question. This explains why Theorem~C will be stated with respect
to the Krull topology rather than being ``parameterized'' by $G_{\delta}$-dense sets in $\diff$ (yet a formulation in terms
of $G_{\delta}$-dense sets is possible, cf. Section~5 for details).

Let $\om \in \Lambda_{(\C^2,0)}$ be a $1$-form with an isolated singularity at the origin and defining
a germ of a nilpotent foliation $\fol$ of type $A^{2k+1}$.

\vspace{0.1cm}

\noindent {\bf Theorem C (Cusps)}. {\sl For every (sufficiently large) $N \in \N$, there exists a $1$-form
$\om^{\prime} \in \Lambda_{(\C^2,0)}$ defining a germ of a foliation $\fol^{\prime}$ and satisfying the
following conditions:
\begin{itemize}
\item $J_0^N \om^{\prime} = J_0^N \om$ (i.e. the forms $\om, \, \om^{\prime}$ are tangent to order $N$ at the origin).

\item The foliations $\fol$ and $\fol^{\prime}$ have $S$ as a common separatrix.

\item There exists a fundamental system of open neighborhoods $\{U_n\}_{n \in \N}$ of $S$, inside a
closed ball $\bar{B}(0,R)$, such that for all $n \in \N$, the leaves of the restriction of $\fol^{\prime}$
to $U_n \setminus S$ are simply connected except for a countable number of them.
\end{itemize}
}

To apply Theorems~A and~B to the topology of leaves of foliations in more general settings is a quite subtle problem
for which the theory developed in \cite{marinmattei} becomes a powerful tool. Concerning our Theorem~C, a self-contained
proof is given in Section~5. This proof, however, amounts to applying the techniques of \cite{marinmattei} to an elementary case.
Another comment about Theorem~C is that, though it is naturally constructed, the systems of neighborhoods $U_n$
cannot be arbitrary. In fact, for an arbitrary neighborhood $U$ it may happen, for example, that intersections of leaves
with the boundary of $U$ create ``holes'' in the corresponding leaves which will no longer be simply connected.
In Section~5 a result slightly more accurate than Theorem~C will be stated. This section also
contains further information and details about these foliations.

To close this Introduction let us make some comments concerning the standard condition that we have considered
in Theorem~A and~B. Namely the fact that the analytic conjugacy class of the initial local diffeomorphisms is always
kept fixed. For this it is interesting to look at a foliation $\tilf$ defined on a neighborhood of a rational
curve $C$ (in turn embedded in some complex surface). The singularities of $\tilf$ in $C$ are denoted by $p_1, \ldots , p_k$.
It is then natural to consider perturbations of $\tilf$ satisfying our standard condition: the analytic class of the local holonomy
map $\sigma_k$ defined by a small loop around $p_k$ is fixed. Since the singularities are simple, this condition is equivalent
to saying that the {\it analytic types}\, of the singularities of $\tilf$ are fixed, as it follows from classical results. In turn,
if in addition these singularities belong to the Poincar\'e domain, then our context is equivalent to the context of
{\it isospectral deformations}\, i.e. deformations
preserving the eigenvalues of each singular point. For singularities belonging to the Siegel domain however our condition is far stronger than the
isospectral one and, in fact, it is expected to be
the natural ``good'' condition for developing a (global) moduli theory for holomorphic foliations.
Finally, when a singularity in the Siegel domain gives rise to a local holonomy of finite order, then the condition becomes
equivalent to deforming the foliation while keeping fixed the order of the local holonomy maps in question.
This last case is precisely the situation that emerges in the analysis
of singularities of type $A^{2k+1}$ and it will play a role in the proof of Theorem~C.

In any event, in a suitable sense, a generic foliation as above will still have all but countably many leaves simply connected.
This assertion may be justified by a construction similar to the construction carried out in Section~5. Alternatively we
can resort to more general results obtained in \cite{marinmattei}.
In particular the preceding theorems can be viewed as a step towards Anosov's conjecture stating a global result
about the existence of only countably many non-simply connected leaves for a generic foliation of the projective plane.

Concerning foliations on the projective plane leaving a projective line invariant, Il'yashenko and Pyartli \cite{ilya} proved that,
in the class of foliations with degree~$d$ of the projective plane leaving the line at infinity invariant, those for which
the holonomy group of (the regular part of) the line at infinity is free are generic. This very interesting
result has a different nature if compared to statements provided in this work and it deserves some comments. Whereas
Il'yashenko and Pyartli do not worry about how the singular points change in their considerations about ``generic foliations'',
one of the main differences between the two works stems from the fact that their theorem is stated for global foliations whose space of
parameters is far more restrictive and of finite dimension. Therefore their result does not apply in a singular context,
for example in the study of foliations leaving a cusp invariant, not only because the ``parameter space'' is totally different
but also because singularities are often ``deformed'' in their procedure. Similarly our construction does not apply in their
global context since it is unclear whether our ``perturbations'' can be realized within the natural parameter space
associated to (global) foliations of degree~$d$. Another issue that needs to be pointed out is that, unfortunately,
Il'yashenko and Pyartli's theorem works only at the ``infinitesimal'' level of the group of germs of diffeomorphisms
fixing $0 \in \C$. Due to the reasons explained above (cf. also Section~4), it therefore does not imply the existence of
simply connected leaves (apart from a countable set) in a fixed neighborhood of the line at infinity. Given the interest
of this type of question, we may wonder whether a suitable blend of ideas in
both papers may lead us to fill in some of the gaps mentioned above and provide further insight into
the general case of Anosov's conjecture.

Finally a word about the structure of the paper. In Section~2 we provide the definition of the analytic topology in the
context adapted to our needs. For the convenience of the reader a proof that this topology possesses the Baire property
was also included. We then provide some general elementary statements comparing formal ``perturbations'' with analytic perturbations
that might also be useful in other similar problems. In Section~3 we introduce our basic technique of perturbation relying on
Riemann maps and on Caratheodory theorem. This section ends with the proof of the ``infinitesimal versions'' of Theorems~A
and~B. In other words we find that we can obtain ``free subgroups'' contained in the group of {\it germs}\, $\diff$. In Section~4
we elaborate much further on the preceding material so as to be able to prove the full statements of Theorems~A and~B along with
some simple variants needed for Section~5. In all these sections we deal exclusively with the case where the corresponding groups
are generated by only two local diffeomorphisms (i.e. $k=2$ in the statements of Theorems~A and~B). This serves only to abridge
notations since the general case does not offer any additional difficulty.
Finally in Section~5 some implications of these theorems to foliations are discussed and Theorem~C is proved.

\section{Formal series and convergent series}

In the sequel let $\diff$ stands for the group of local holomorphic
diffeomorphisms fixing $0 \in \C$. The group of formal series
$\sum_{i=1}^{\infty} c_i x^i$, $c_i \in \C$, is going to be denoted
by $\fdiff$. There is an obvious injection of $\diff$ in $\fdiff$ which
associates to an element of $\diff$ its Taylor series about $0 \in \C$.

Let us begin by defining the so called {\it analytic topology}\,
(or $C^{\omega}$-topology) in $\diff$. To the best of our knowledge this
type of topology was first considered by Takens \cite{takens} who also
observed that it possesses the Baire property.

Given $r >0$, let $B(r) \subset \C$ denote the open disc of radius $r$ about
$0 \in \C$. Consider a holomorphic function $h$ defined around $0 \in \C$
and taking values in $\C$. If $h$ possesses a holomorphic extension (still
denoted by $h$) to $B(r)$, then set $\Vert h \Vert_r = \sup_{z \in B(r)}
\vert h(z) \vert$. Otherwise we pose $\Vert h \Vert_r = + \infty$.

Next for $r, \varepsilon >0$ and $f \in \diff$ chosen, let
$(f + \calU^{\varepsilon}_r) \subseteq \diff$ be the set defined by
$$
(f + \calU^{\varepsilon}_r) = \{ g \in \diff \; \; ; \; \; \Vert
g-f \Vert_r < \varepsilon \} \, ,
$$
where $(g-f)$ is interpreted simply as a holomorphic function that need
not be a local diffeomorphism at $0 \in \C$. The {\it analytic topology}\, on
$\diff$ can now be defined as being the one generated by the sets
$(f + \calU^{\varepsilon}_r)$. In other words, the sets $(f + \calU^{\varepsilon}_r)$
form a {\it basis of open sets} for the analytic topology. An immediate
consequence of this definition is that a sequence $\{ f_i \}_{i \in \C}
\subset \diff$ is convergent in the analytic topology if and only if,
to every pair $r, \varepsilon >0$, there corresponds $N \in \N$ such that
$\Vert f_i - f_j \Vert_r < \varepsilon$ whenever both $i, \, j$ are greater
than $N$.

\begin{obs}
\label{none}
{\rm Fixed $k \in \N^{\ast}$, the $k$-jet projection from $\diff$ to $\C^k$ defined by
$$
g \longmapsto (g^{(1)} (0) , g^{(2)} (0), \ldots , g^{(k)} (0))
$$
is continuous in the analytic topology as an easy consequence of Cauchy estimates. Conversely, this projection admits a natural section $T$
(Taylor polynomial) defined by
$$
(a_1 , \ldots , a_k) \longmapsto a_1 z + \frac{a_2}{2} z^2 + \cdots +   \frac{a_k}{k!} z^k
$$
which is also continuous for the topologies in question. This simple remark will often be used in the course of this work.}
\end{obs}

Following \cite{takens} let us show that the set $\diff$, endowed with the analytic
topology, is a Baire space.

\begin{lema}
\label{takens}
The group $\diff$ equipped with the analytic topology is a Baire space,
namely the countable intersection of open dense sets is dense.
\end{lema}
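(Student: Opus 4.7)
\medskip

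The plan is the classical inductive construction underlying Baire's theorem, adapted to the fact that the basic open sets of the analytic topology are ``balls'' for a family of seminorms $\Vert \cdot \Vert_r$ rather than for a single complete metric. Given a countable family $\{A_n\}_{n \in \N}$ of open dense subsets of $\diff$ and any nonempty open set $V \subset \diff$, the goal is to exhibit a point of $V \cap \bigcap_n A_n$. I will build a nested sequence of basic open sets $U_n = f_n + \calU^{\varepsilon_n}_{r_n} \subseteq V \cap A_1 \cap \cdots \cap A_n$, show that the centers $f_n$ converge to a holomorphic function $f_\infty$, and verify that $f_\infty$ belongs to $\diff$ as well as to every $U_n$.

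First, pick $f_0 \in V$ together with a basic neighborhood $f_0 + \calU^{\varepsilon_0}_{r_0} \subseteq V$ and set $c_0 := |f_0'(0)| > 0$. Inductively, given $U_n = f_n + \calU^{\varepsilon_n}_{r_n}$ with $r_n \geq r_0$ and $U_n \subseteq V \cap A_1 \cap \cdots \cap A_n$, use the density of $A_{n+1}$ to pick $f_{n+1} \in U_n \cap A_{n+1}$ with $\Vert f_{n+1} - f_n \Vert_{r_n}$ as small as needed, and then a basic neighborhood $U_{n+1} = f_{n+1} + \calU^{\varepsilon_{n+1}}_{r_{n+1}}$ of $f_{n+1}$ contained in $U_n \cap A_{n+1}$. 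Shrinking $\varepsilon_{n+1}$ and enlarging $r_{n+1} \geq r_n$ if necessary, arrange
$$
\Vert f_{n+1} - f_n \Vert_{r_n} + \varepsilon_{n+1} < \varepsilon_n / 2 \quad \text{and} \quad \varepsilon_{n+1} < 2^{-(n+1)} c_0 r_0 \, .
$$
The first inequality provides sup-norm room so that any uniform limit on $B(r_n)$ of elements of $U_{n+1}$ still lies in $U_n$; the second inequality will control the derivative of the limit at the origin.

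Since $r_n \geq r_0$ for all $n$ and $\sum_n \Vert f_{n+1} - f_n \Vert_{r_m} < \infty$ for each fixed $m$, the sequence $\{f_n\}$ is uniformly Cauchy on each disc $B(r_m)$, and so converges to a holomorphic function $f_\infty$ defined on $\bigcup_m B(r_m)$ with $f_\infty(0) = 0$. The main (and essentially the only non-trivial) obstacle is the possibility that $f_\infty'(0) = 0$, which would make $f_\infty$ a non-invertible germ and thus not an element of $\diff$. This is precisely what the second constraint on $\varepsilon_{n+1}$ is designed to prevent: applying Cauchy's estimate $|h'(0)| \leq \Vert h \Vert_{r_0}/r_0$ to the telescoping series gives
$$
|f_\infty'(0) - f_0'(0)| \leq \sum_{n \geq 0} \Vert f_{n+1} - f_n \Vert_{r_0}/r_0 \leq \sum_{n \geq 0} \varepsilon_n/r_0 \leq c_0/2 \, ,
$$
so $|f_\infty'(0)| \geq c_0/2 > 0$ and hence $f_\infty \in \diff$. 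Finally, the first inequality in the construction and a direct telescoping check on $B(r_n)$ yield $\Vert f_\infty - f_n \Vert_{r_n} < \varepsilon_n$ for every $n$, i.e. $f_\infty \in U_n$ for all $n$. Thus $f_\infty \in V \cap \bigcap_n A_n$, and the countable intersection is not only non-empty but dense, as $V$ was arbitrary.
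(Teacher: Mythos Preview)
Your argument is correct and follows the same nested-basic-neighborhood scheme as the paper's proof; in fact you are more careful, explicitly arranging $f_\infty'(0)\neq 0$ via the second constraint on $\varepsilon_{n+1}$ and Cauchy's estimate, a point the paper simply glosses over. One small inaccuracy that does not affect the conclusion: $f_\infty$ itself need not extend to $\bigcup_m B(r_m)$ (only the differences $f_\infty - f_m$ extend holomorphically to $B(r_m)$, since $f_m$ may not), but what you actually need and verify is $\Vert f_\infty - f_n\Vert_{r_n} < \varepsilon_n$, and that holds.
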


\begin{proof}
For every $i \in \N$, let $U_i$ denote an open and
dense subset of $\diff$. Given a nonempty open set $V \subseteq \diff$, we must
check that $V \cap (\bigcap_{i \in \N} U_i) \neq \emptyset$. Clearly it
suffices to show this when $V$ is an open set of the form
$V = (f + \calU^{\varepsilon}_r)$, for certain $r, \varepsilon >0$ and
$f \in \diff$.

To do this, let us construct a sequence of triplets $(f_i, r_i ,\varepsilon_i)$
verifying the following conditions:
\begin{enumerate}

\item $(f_0, r_0 ,\varepsilon_0) = (f,r,\varepsilon)$.

\item $f_i \in (f_{i-1} + \calU_{r_{i-1}}^{\varepsilon_{i-1}/2}) \cap U_i$,
($\varepsilon_0 = \varepsilon$).

\item $(f_i + \calU_{r_i}^{\varepsilon_i}) \subseteq
(f_{i-1} + \calU_{r_{i-1}}^{\varepsilon_{i-1}/2}) \cap U_i$.

\item $\{ \varepsilon_i\}$ decreases monotonically to {\it zero}\,
while $\{ r_i \}$ increases monotonically to $+\infty$.

\end{enumerate}

The existence of a sequence as above is clear and, thanks to the convergence
criterion stated above, it follows that $\{ f_i\}$ converges in the analytic
topology to a certain $f_{\infty} \in \diff$. We claim that
$f_{\infty}$ belongs to $V \cap (\bigcap_{i \in \N} U_i)$. Indeed, chosen
$i_0 \in \N$, the diffeomorphism $f_j$ belongs to
$(f_{i_0} + \calU_{r_{i_0}}^{\varepsilon_{i_0}/2})$ for all $j > i_0$. Therefore
the limit $f_{\infty}$ belongs to $(f_{i_0} + \calU_{r_{i_0}}^{\varepsilon_{i_0}/2})
\subseteq V \cap U_{i_0}$. Since $i_0 \in \N$ was arbitrarily chosen,
the statement follows at once.
\end{proof}

\begin{obs}
\label{topologicalgroup}
{\rm Note that $\diff$ endowed with the analytic topology is not a
topological group. This happens because the mapping from $\diff$
to $\diff$ that associates to a chosen $h \in \diff$ the element
$f \circ h$, where $f \in \diff$ is fixed, is not continuous in
general. In fact, a sequence $\{ h_i \} \subset \diff$ converges
in the analytic topology to $h$ if and only if $h_i =h+r_i$ where
$r_i \in \calU_r^{\varepsilon}$ for every fixed $r, \varepsilon > 0$
and sufficiently large $i$. However, if $f$ has a bounded domain of
definition, this does not guarantee that $f \circ (h+r_i) -f\circ h$
admits a holomorphic extension to arbitrarily large discs.
This remark was once communicated to the second author by L. Lempert
to whom we wish to thank.}
\end{obs}

Let us now turn to the central point of this section. Let $f, g \in \diff$
(resp. $\hat{f}, \hat{g} \in \fdiff$) be two holomorphic diffeomorphisms
(resp. formal series) fixing the origin and assume that both $f, g$ (resp.
$\hat{f}, \hat{g}$) are distinct from the identity map. Denote by $G$ (resp.
$\hat{G}$) the subgroup of $\diff$ (resp. $\fdiff$) generated by $f, \, g$
(resp. by $\hat{f}, \, \hat{g}$). Naturally every reduced word $W(a,b)$ in the letters $a, a^{-1}, b, b^{-1}$ represents
an element of $G$ (resp. $\hat{G}$) by means of the substitutions $a = f$ and $b = g$ (resp $a = \hat{f}$ and $b = \hat{g}$).
It is understood that the group law of $G$ (resp. $\hat{G}$) becomes identified with the composition
of the holomorphic diffeomorphisms (resp. formal series).

The problem we are considering is the following one. Assume we are given two
holomorphic diffeomorphisms $f, g$ (resp. formal series $\hat{f}, \hat{g}$)
that satisfy a non-trivial relation in the sense that there exists a nontrivial
reduced word $W(a,b)$ such that $W(f,g) = {\rm id}$ (resp. $W(\hat{f},\hat{g}) =
{\rm id}$). We want to know if there exists $h \in \diff$ (resp. $\hat{h} \in
\fdiff$) very close to the identity map such that $f, h^{-1} \circ g \circ h$
does not satisfy any non-trivial relation.

\begin{defnc}
\label{Urelation}
Consider a pair $f, g$ of holomorphic diffeomorphisms in $\diff$ and consider
also a reduced word $W(a, b)$ such that $W (f, g) = {\rm id}$ in $\diff$. The word
$W(a, b)$ is said to be a universal relation if for every $\hat{h} \in \fdiff$ we
still have $W (f, \hat{h}^{-1} \circ g \circ \hat{h}) = {\rm id}$.
\end{defnc}

In the above statement $W(f,g)$ stands for the element of $\diff$ obtained by the
substitution $a = f$, $a^{-1} = f^{-1}$, $b = g$ and $b^{-1} = g^{-1}$ on $W(a,b)$
and by considering the group law as being the composition of formal series. An
analogous remark applies to $W (f, \hat{h}^{-1} \circ g \circ \hat{h})$.

We can finally state the main result of this section.

\begin{teo}
\label{AlgebraicTh}
Let $f, g \in \diff$ be two non-trivial holomorphic diffeomorphisms fixing the origin
and let $W(a,b)$ be a non-trivial reduced word in $f,g$ and their inverses. If $W (a,b)$
is not a universal relation, i.e. if there exists $\hat{h} \in \fdiff$ such that
$W (f, \hat{h}^{-1} \circ g \circ \hat{h}) \ne {\rm id}$ then there also exists
$h \in \diff $ such that $W (f, h^{-1} \circ g \circ h) \ne {\rm id}$. In fact the
set of elements $h \in \diff$ such that $W (f, h^{-1} \circ g \circ h) \ne {\rm id}$
is open and dense for the analytic topology.
\end{teo}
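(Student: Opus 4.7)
The plan is to show that the set $\calL := \{h \in \diff : W(f, h^{-1}\circ g\circ h) \neq {\rm id}\}$ is open and dense for the analytic topology by reducing both properties to a statement about the Taylor coefficients of $h$. The key preliminary observation is that for every $n \in \N$, the $n$-th Taylor coefficient of the formal series $W(f, h^{-1}\circ g\circ h)$ equals a universal rational expression $P_n(a_1(h), \ldots, a_{M(n)}(h))$ in finitely many Taylor coefficients $a_j(h)$ of $h$, with denominator a power of $a_1(h)$. Starting from the composition and inversion formulas in $\fdiff$ and proceeding by induction on the word length of $W$, one verifies both that only finitely many coefficients of $h$ intervene and that the resulting expression is a polynomial in $a_1(h), a_1(h)^{-1}, a_2(h), \ldots, a_{M(n)}(h)$.

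Openness then comes out immediately. If $h_0 \in \calL$, some $P_n(a_1(h_0), \ldots, a_{M(n)}(h_0))$ is nonzero. By Remark~\ref{none} each coefficient map $h \mapsto a_j(h)$ is continuous on $\diff$ in the analytic topology; moreover $a_1(h) \neq 0$ for every $h \in \diff$, so $P_n$ composed with these jet functions is continuous at $h_0$. Its non-vanishing thus persists on an analytic-topology neighborhood of $h_0$, which is therefore contained in $\calL$.

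For density, fix a basic open set $(h_0 + \calU_r^\varepsilon)$. The hypothesis that $W$ is not universal, applied to the formal series $\hat{h}$, supplies an index $n$ with $P_n(a_1(\hat{h}), \ldots, a_{M(n)}(\hat{h})) \neq 0$; hence $P_n$ is not identically zero on the connected open set $\{a_1 \neq 0\} \subset \C^{M(n)}$, so by the identity principle its non-vanishing locus is open and dense there. For any $\delta > 0$ I would therefore pick $(c_1, \ldots, c_{M(n)}) \in \C^{M(n)}$ with $\max_j |c_j| < \delta$ such that the point $(a_1(h_0) + c_1, \ldots, a_{M(n)}(h_0) + c_{M(n)})$ lies in this non-vanishing locus. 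The element $h := h_0 + c_1 z + c_2 z^2 + \cdots + c_{M(n)} z^{M(n)}$ then satisfies $a_j(h) = a_j(h_0) + c_j$, stays in $\diff$ for small enough $\delta$ (since $a_1(h_0) \neq 0$), and has $\Vert h - h_0\Vert_r \leq \sum_{j \leq M(n)} |c_j|\, r^j < \varepsilon$ once $\delta$ is small. Hence $h \in (h_0 + \calU_r^\varepsilon) \cap \calL$.

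The main obstacle is the polynomial/rational dependence statement in the first paragraph: once that is granted, openness is essentially immediate from the continuity of jet projections, and density reduces to the fact that a nonzero holomorphic (in fact rational) function on a connected open set of $\C^M$ has nowhere-dense zero locus. The verification of that dependence is routine bookkeeping with the classical formulas for composition and inversion in $\fdiff$, iterated over the finitely many letters of $W$; the crucial quantitative point is simply that the indices of Taylor coefficients of $h$ that can show up in the $n$-th coefficient of the output remain bounded by a constant $M(n)$ that depends only on $n$ and on the combinatorial length of $W$.
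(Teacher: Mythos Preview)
Your proof is correct and follows essentially the same route as the paper's: both reduce the $n$-th Taylor coefficient of $W(f,h^{-1}\circ g\circ h)$ to an algebraic (rational) function of finitely many Taylor coefficients of $h$, observe that the hypothesis makes this function not identically zero, and then perturb only finitely many coefficients of a given $h_0$ by a polynomial to land in the dense complement of the zero locus while staying close in the analytic topology. The paper packages the finite-dimensional step as a Zariski-open subset of $\C^K$ (its Lemma~\ref{zariski1}), whereas you phrase it directly via the identity principle for the rational function $P_n$; these are the same argument.
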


The theorem above allows us to prove also the following result:

\begin{teo}\label{representation}
Suppose we are given $f, \, g$ satisfying no universal relation. Then there exists a
set $\calU \subseteq \diff$, dense for the analytic topology, such that if $h \in \calU$
then $f, \, h^{-1} \circ g \circ h$ generates a free subgroup of $\diff$.
\end{teo}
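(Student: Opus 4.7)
The plan is to realize $\mathcal{U}$ as a countable intersection of open dense sets and to conclude by the Baire property supplied by Lemma~\ref{takens}. Since the alphabet $\{a, a^{-1}, b, b^{-1}\}$ is finite, the collection of non-trivial reduced words $W(a,b)$ is countable; enumerate it as $\{W_n\}_{n\in\N}$. To say that $f$ and $h^{-1}\circ g\circ h$ generate a free subgroup of rank two in $\diff$ is exactly to say that $W_n(f, h^{-1}\circ g\circ h)\neq {\rm id}$ for every $n\in\N$. It is therefore natural to consider, for each $n$, the set
$$
\mathcal{U}_n \;=\; \bigl\{h\in\diff \;;\; W_n(f, h^{-1}\circ g\circ h)\neq {\rm id}\bigr\}\, .
$$

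The next step is to invoke Theorem~\ref{AlgebraicTh} to show that each $\mathcal{U}_n$ is open and dense for the analytic topology. For this one must verify that no $W_n$ is a universal relation for $(f,g)$: equivalently, that for every $n$ there exists $\hat{h}_n\in\fdiff$ with $W_n(f, \hat{h}_n^{-1}\circ g\circ \hat{h}_n)\neq {\rm id}$. This is precisely the hypothesis of the theorem, split into two cases: if $W_n(f,g)\neq {\rm id}$ then $\hat{h}_n={\rm id}$ works trivially, while if $W_n(f,g)= {\rm id}$ then, by assumption, the relation $W_n$ is not universal and some formal $\hat{h}_n$ breaks it. In either case Theorem~\ref{AlgebraicTh} applies and yields the open density of $\mathcal{U}_n$.

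Setting $\mathcal{U} = \bigcap_{n\in\N}\mathcal{U}_n$, Lemma~\ref{takens} ensures that $\mathcal{U}$ is a $G_\delta$-dense subset of $\diff$; by construction any $h\in\mathcal{U}$ has the property that no non-trivial reduced word in $f$ and $h^{-1}\circ g\circ h$ evaluates to the identity, so $f$ and $h^{-1}\circ g\circ h$ generate a free subgroup of $\diff$, as required. The main difficulty in this chain of reasoning does not lie in the present step --- which is a purely formal Baire-category assembly --- but rather in Theorem~\ref{AlgebraicTh} itself, where the passage from the non-triviality of $W_n$ under a single formal conjugation $\hat{h}_n$ to an open dense set of genuinely \emph{analytic} conjugators $h$ is the real content.
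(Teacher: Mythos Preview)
Your proof is correct and follows essentially the same approach as the paper's own proof: define one open dense set per non-trivial reduced word via Theorem~\ref{AlgebraicTh}, intersect over the countable family of words, and invoke the Baire property from Lemma~\ref{takens}. Your explicit case split (whether $W_n(f,g)={\rm id}$ or not) is a minor elaboration of what the paper leaves implicit, but the argument is the same.
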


Note that the theorem above assumes that ``relations can be broken" in the category of formal
series to conclude the existence of no relations in the category of convergent power series
in $\diff$.

\begin{proof}[Proof of Theorem~\ref{representation}]
Consider a non-trivial reduced word in $f,g$ and their inverses.
By assumption there is no universal relation $W$ for the pair $f, \, g$. This means that for every
word $W$ there exists $\hat{h}_W \in \fdiff$ such that $W(f, \hat{h}_W \circ g \circ \hat{h_W}^{-1})
\ne {\rm id}$. According to Theorem~\ref{AlgebraicTh} there exists an open dense
set $\calU_W \subseteq \diff$ in the analytic topology whose elements $h$ satisfy $W(f, h \circ g \circ h^{-1})
\ne {\rm id}$. Since there are only countably many words $W$ and since the analytic topology is a Baire
space, the set
\[
\calU = \bigcap_{W \in G} \calU_W
\]
is dense in $\diff$. It is immediate to check that the elements $h$ in $\calU$ have the property that
$f, \, h^{-1} \circ g \circ h$ generates a free subgroup of $\diff$.
\end{proof}

To prove Theorem~\ref{AlgebraicTh} let us begin by considering holomorphic diffeomorphisms
$f,g \in \diff$ such that $W(f,g) = {\rm id}$ for a fixed non-trivial reduced word $W$.
Consider also a formal series $\hat{h} \in \fdiff$ such that $W (f, \hat{g}) \ne {\rm id}$,
where $\hat{g} = \hat{h}^{-1} \circ g \circ \hat{h} \in \fdiff$. Assume that $W (f, \hat{g})
= \sum_{i \geq 1} a_i z^i$. Since $W (f, \hat{g}) \ne {\rm id}$ then there exists $K \geq 2$
such that $a_K \ne 0$. Denote by $\hat{h}_{K}$ the polynomial of degree $K$ obtained by
truncating $\hat{h}$ in the obvious way and let $\hat{g}_K = \hat{h}^{-1}_K \circ g \circ
\hat{h}_K$. Since $W(f, \hat{g})$ coincides with $W(f, \hat{g}_K)$ to the order $K$, it
immediately follows that $W (f, \hat{g}_K) \neq {\rm id}$ as well.

Fix $K$ as above and let us identify the coefficients of $\hat{h}_K$ to a point $P_K$ in $\C^K$.
Next note that the coefficients of $\hat{h}^{-1}$ are entirely determined by those of $\hat{h}$
through explicit algebraic formulas. In particular the coefficients of $\hat{h}_K^{-1}$ are algebraic
functions of the coefficients of $\hat{h}_K$ and so are the coefficients of $\hat{h}^{-1}_K \circ g
\circ \hat{h}_K$. Note however that $\hat{h}_K^{-1}$ is not a polynomial in general. All this can
be summarized by saying that, with the preceding identifications, we have a natural affine algebraic
map
$$
J^r \circ W\, : \; \C^K \longrightarrow \C^r \, ,
$$
for a suitable $r \in \N$, whose value at a given point consists of the $r$-tuple formed by
the coefficients of the resulting word until degree $r$. In other words, for a chosen $r$,
$J^r \circ W (P_K)$ is the $r$-jet of the formal series $W (f, \hat{g}_K)$. The preimage of
$(1,0, \ldots ,0) \in \C^r$ is by construction a Zariski-closed affine subset of $\C^{K}$.
This is a properly contained subset for $r \geq K$ since $W (f, \hat{g}_K) \neq {\rm id}$ or, more precisely,
since the coefficient of $z^K$ in the series expansion of $W (f, \hat{g}_K)$ is different from {\it zero}.
Therefore we have proved:

\begin{lema}
\label{zariski1}
With the preceding identifications, there is a non-empty Zariski open subset ${\mathcal O}_W$
of $\C^{K}$ whose points, thought of as polynomials $\hat{g}_K$, satisfy $J^r \circ W (P_K)
\neq {\rm id}$.\qed
\end{lema}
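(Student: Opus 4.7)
The plan is to realize the desired set ${\mathcal O}_W$ as the complement, in $\C^K$, of the preimage of a single point under an affine algebraic map, exactly as the discussion preceding the statement already foreshadows. First I would make the algebraic nature of $J^r\circ W$ explicit. Parametrizing $\hat{h}_K$ by its coefficient vector $P_K=(p_1,\ldots,p_K)$, the coefficients of the formal inverse $\hat{h}_K^{-1}$ up to any fixed order are given by the standard recursive inversion formulae for formal power series, which are polynomial in $p_1,\ldots,p_K$ after clearing powers of $p_1$. Composition with the Taylor polynomial of $g$ to sufficient order, followed by substitution into the word $W$ (with $f$ replaced by its own Taylor polynomial to order $r$) and truncation to order $r$, then exhibits $J^r\circ W$ as a rational morphism $\C^K\dashrightarrow\C^r$ whose denominators involve only powers of $p_1$. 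Restricting to the Zariski open locus $\{p_1\neq 0\}$ gives a morphism of affine varieties in the honest sense.

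Next, the condition ``$J^r\circ W(P_K)={\rm id}$'' is expressed by the simultaneous vanishing of $r$ explicit polynomial equations (one saying that the linear coefficient equals $1$, the remaining $r-1$ saying that the coefficients of $z^2,\ldots,z^r$ vanish). Its solution set is therefore Zariski-closed in $\C^K$. To check that this locus is \emph{properly} contained in $\C^K$, I would plug in the concrete truncation $\hat{h}_K$ of the formal series $\hat{h}$ supplied by the hypothesis of Theorem~\ref{AlgebraicTh}: choosing $r\geq K$, the coefficient of $z^K$ in $W(f,\hat{g}_K)$ equals $a_K\neq 0$ by construction, so the associated point $P_K$ does not lie in the closed set. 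Taking ${\mathcal O}_W$ to be its complement (intersected with the Zariski open set $\{p_1\neq 0\}$) gives a non-empty Zariski open subset of $\C^K$ with the advertised property.

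The only mild subtlety worth flagging is that the formal inversion $\hat{h}_K\mapsto\hat{h}_K^{-1}$ is only rational on all of $\C^K$, becoming polynomial after restriction to $\{p_1\neq 0\}$; this is cosmetic, since $\{p_1\neq 0\}$ is itself Zariski open and the test point coming from $\hat{h}$ (which is close to the identity, hence has $p_1$ close to $1$) lies there. Beyond this, the only work is bookkeeping the polynomial combinatorics of iterated composition, which is entirely routine. I do not expect any essential obstacle, and that is presumably why the authors signed the statement off with \qed.
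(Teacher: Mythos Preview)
Your proposal is correct and follows the same approach as the paper, which simply records the lemma as an immediate consequence of the preceding discussion (the map $J^r\circ W$ is algebraic, the preimage of $(1,0,\ldots,0)$ is Zariski-closed, and the point $P_K$ arising from the truncation of $\hat{h}$ witnesses that this preimage is proper); you are in fact slightly more careful than the paper in flagging the rational-versus-polynomial issue and restricting to $\{p_1\neq 0\}$. One minor correction: the test point has $p_1\neq 0$ not because $\hat{h}$ is ``close to the identity''---nothing in the setup of Theorem~\ref{AlgebraicTh} requires this---but simply because $\hat{h}\in\fdiff$ is by definition a formal diffeomorphism, so $p_1=\hat{h}'(0)\neq 0$.
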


Note that the lemma above holds for every sufficiently large value of $K$ and $r (\geq K)$.
Furthermore for $r=K$ if $W(f, \hat{g}) = {\rm id}$ so does $J^r \circ W(f, \hat{g}_K)$.
We are finally ready to prove Theorem~\ref{AlgebraicTh}.

\begin{proof}[Proof of Theorem~\ref{AlgebraicTh}]
Recall that $W$ is a non-trivial reduced word that does not constitute a universal relation.
Therefore there is $\hat{h} \in \fdiff$ such that $W(f, \hat{h} \circ g \circ \hat{h}^{-1})
\ne {\rm id}$. In turn Lemma~\ref{zariski1} asserts that for sufficiently large $K$ and $r \geq K$
there exists a non-empty Zariski open subset of $\C^K$, denoted by ${\mathcal O}_W$, whose points,
thought of as polynomials, satisfy $J^r \circ W(P_K) \ne 0$. Being Zariski open and non-empty,
${\mathcal O}_W$ is in particular dense for the ordinary topology of $\C^K$. We can take $r = K$.

Let $\calU_W \subset \diff$ be the open set in the analytic topology of elements $h$ satisfying
$W(f, h \circ g \circ h^{-1}) \neq {\rm id}$. Our purpose is to show that the set $\calU_W$ is not
only non-empty but also dense in the analytic topology of $\diff$.

Fix an element $h$ of $\diff$ and let us assume that $W(f, h \circ g \circ h^{-1}) = {\rm id}$. We
shall prove the existence of elements $\tilde{h}$ arbitrarily close to $h$ in the analytic topology
and satisfying $W(f, \tilde{h} \circ g \circ \tilde{h}^{-1}) \ne {\rm id}$. Let $K$ be as above
and denote by $g_h$ (resp. $g_{h,K}$) the element $h \circ g \circ h^{-1}$ (resp. $h_K \circ g
\circ h^{-1}_K$). Then the coefficients of $g_{h,K}$ depend only on the first $K$
coefficients of $h$. Moreover this dependence is given by algebraic functions. Assume that
\[
h = \sum_{k \geq 1} a_i z^i = a_1 x + \cdots + a_K x^K + \cdots
\]
is the Taylor's expansion of $h$ at the origin. Since $W(f, g_h) = {\rm id}$ and since $W(f, g_{h,K})$
coincides with $W(f, g_h) = {\rm id}$ to order $K$, it follows that $J^K \circ W(f, g_{h,K}) = {\rm id}$
as well. This means that $(a_1, \ldots, a_K) \in \C^K \setminus {\mathcal O}_W$. However ${\mathcal O}_W$
is a non-empty Zariski open set of $\C^K$ so that there exists $(\tilde{a}_1, \ldots, \tilde{a}_K) \in {\mathcal O}_W$
that is arbitrarily close to $(a_1, \ldots, a_K)$. Now let $\tilde{h}$ be the formal series given by
\[
\tilde{h} = \sum_{i \geq 1} b_i z^i
\]
where $b_i = \tilde{a}_i$ if $1 \leq i \leq K$ and $b_i = a_i$ for $i > K$. We claim the following holds:
\begin{enumerate}
\item $\tilde{h}$ is convergent so that it defines an element of $\diff$
\item $W(f, g_{\tilde{h}}) \ne {\rm id}$
\item $\tilde{h}$ can be made arbitrarily close to $h$ in the analytic topology. In other words, given a
neighborhood of $h$, there exists $\tilde{h} \in \diff$ lying in this neighborhood and fulfilling conditions
(1) and (2) above.
\end{enumerate}

The fact that $\tilde{h}$ is convergent follows from observing that the power series of $\tilde{h}$ coincides with
the power series of $h$ up to a finite number of terms. Condition (2) is an immediate consequence of the
fact that $W(f, g_{\tilde{h}})$ coincides with $W(f, g_{\tilde{h}})$ to order $K$ where $h_K$ is obtained
from the point $(\tilde{a}_1, \ldots, \tilde{a}_K) \in {\mathcal O}_W$.

Finally, to check condition (3) let a neighborhood $U$ of $h$ in the analytic topology be fixed. Without
loss of generality we can assume that $U = h + \calU^{\varepsilon}_r$ for some $\varepsilon , r > 0$.
Next note that $h - \tilde{h}$ is a polynomial of degree $K$. More precisely we have
\[
(h - \tilde{h})(z) = (a_1 - b_1)z + \cdots + (a_K - b_K) z^K \, .
\]
with $a_i$ and $b_i$ as above. As a polynomial, it admits a holomorphic extension to $B(r)$. In fact
$h - \tilde{h}$ it is defined on all of $\C$. Since ${\mathcal O}_W$ is dense in the ordinary topology of $\C^K$
we can choose $b_i$ arbitrarily close to $a_i$. In particular, we can choose $b_i$ such that
$\Vert h - \tilde{h} \Vert_r < \varepsilon$. This concludes the proof of the theorem.
\end{proof}

\section{Destroying relations}

From now on we shall limit ourselves to proving our main theorems in the case of two (germs of) diffeomorphisms.  This will help us
to avoid cumbersome notations and make the proof more transparent. The required adaptations to deal with the general case are
more than straightforward.

The purpose of this section is to guarantee the non-existence of universal relations under
the preceding conditions. The proof is based on the study of the convergence of a suitable sequence of
Riemann mappings. The non-existence of universal relations will quickly lead to the proofs of our main
results. It should also be pointed out that we shall ``destroy non-trivial relations" by considering
only convergent power series, though it would be enough to destroy them in the category of formal series
as shown in the previous section. Yet we cannot dispense with the discussion carried out in Section~2
since we must show that the set of diffeomorphisms ``$h$" that ``breaks a given relation" is open and dense
in a suitable topology possessing Baire property.

In any event the main result of this section is as follows:

\begin{teo}\label{existence}
With the notations of Section~2, there is no universal relation consisting of a reduced word that is not of
the form $f^k$ or $g^l$ with $k,l \in \Z$.
\end{teo}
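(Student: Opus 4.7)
The plan is, for each reduced word $W(a,b)$ that is not of the form $a^k$ or $b^l$, to exhibit an $\hat{h} \in \fdiff$ (in fact a convergent $h \in \diff$) with $W(f, \hat{h}^{-1} \circ g \circ \hat{h}) \ne \mathrm{id}$ in $\fdiff$; by Definition~\ref{Urelation} this says $W$ is not a universal relation. First I would perform a cyclic reduction: the relation $W(f, g_{\hat{h}}) = \mathrm{id}$ is preserved under conjugation, so passing to a cyclic conjugate we may assume $W = a^{n_1} b^{m_1} \cdots a^{n_r} b^{m_r}$ with $r \geq 1$ and all exponents nonzero; the hypothesis that $W$ is neither a pure $a$-power nor a pure $b$-power is precisely what lets us reach the form $r \geq 1$.

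Next I would fix a disk $V$ around $0$ on which $f, g$ are injective and holomorphic, and select a test point $p \in V \setminus \{0\}$. The expansion
\[
W(f, g_h)(p) = f^{n_1} \circ h^{-1} \circ g^{m_1} \circ h \circ \cdots \circ f^{n_r} \circ h^{-1} \circ g^{m_r} \circ h\,(p)
\]
displays finitely many intermediate ``way-points'' at which $h^{\pm 1}$ acts; the goal is to design $h$ so that the composite orbit fails to return to $p$. To build such $h$ I would appeal to the Riemann mapping theorem: for any simply connected Jordan domain $\Omega \ni 0$ in $\C$, the normalized Riemann map $\phi_\Omega \colon (\D, 0) \to (\Omega, 0)$ (with $\phi_\Omega'(0) > 0$), after composition with disk automorphisms fixing $0$ and a scaling, yields an element of $\diff$ whose behavior at finitely many prescribed points is adjustable by shaping $\Omega$ with suitable protrusions or slits. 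Carath\'eodory's kernel convergence theorem then allows us to pass to limits of such domain-shapes, guaranteeing that if $\Omega_n \to \Omega$ in the kernel sense then $\phi_{\Omega_n} \to \phi_\Omega$ locally uniformly on $\D$; this makes it possible to realize idealized configurations for $h$ by honest convergent elements of $\diff$.

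The main obstacle lies in the dynamical-combinatorial verification that the $h$ so constructed genuinely breaks the relation. The hypothesis that $W$ is mixed is essential here: in the excluded case $W = b^l$ one would have $W(f, g_h) = h^{-1} \circ g^l \circ h$, which is conjugate to $g^l$ and so independent of $h$ up to conjugation (symmetrically for $W = a^k$), hence no choice of $h$ could possibly break the relation. For a genuinely mixed $W$, by contrast, one must exploit the non-commutation of $f$ and $g$ together with the geometric freedom afforded by the Riemann-mapping construction in order to argue that the successive displacements introduced by $h$ at the $r$ intermediate way-points cannot all conspire to close the orbit back at $p$. Turning this heuristic into a rigorous orbit-tracking argument, while ensuring that $h$ remains an honest germ of a holomorphic diffeomorphism at $0$ via Carath\'eodory convergence, constitutes the technical heart of the proof.
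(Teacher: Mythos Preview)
Your outline correctly identifies the toolkit the paper uses---a test point, Riemann maps of slit or protruded domains, and Carath\'eodory kernel convergence---and your cyclic reduction is morally the same as the paper's reduction to the case where the point of maximal modulus occurs at index $k-1$. But you stop precisely where the actual idea of the proof begins: you say that ``the successive displacements introduced by $h$ at the $r$ intermediate way-points cannot all conspire to close the orbit back at $p$,'' and that making this rigorous ``constitutes the technical heart of the proof.'' The paper does not leave this as a heuristic; it supplies a concrete geometric mechanism, and that mechanism is what your proposal is missing.

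Here is the missing idea. Among the points $z_0, z_1, \ldots, z_k$ of the itinerary of the test point, single out the \emph{unique} point $z_{i_0}$ of maximal modulus (after a generic linear change of coordinates it is unique), and by a cyclic permutation of the word and, if necessary, swapping the roles of $f$ and $g$, arrange that $i_0 = k-1$ and that the next letter is a $g$-power. Rescale so $z_{k-1}=1$. Now construct $H_m$ as the Riemann map of a shrinking neighborhood of $\{|z|<1\}\cup[1,\delta]$: by Carath\'eodory, $H_m$ converges to the identity uniformly on every disk of radius $<1$, while $H_m(1)\to\delta$. The point is that \emph{all} the other itinerary points $z_0,\ldots,z_{k-2}$ lie in a disk of radius $\rho<1$, so conjugating $g$ by $H_m$ barely moves those steps; only the single step at the maximal-modulus point is displaced, and it is displaced by a definite amount. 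This radial separation is exactly what prevents the ``conspiracy'' you worry about---there is no cancellation because only one way-point is significantly perturbed. Without isolating the maximal-modulus point you have no control over the interaction of the $r$ displacements, and your proposal gives no argument for why they do not cancel.
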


Before proving Theorem~\ref{existence} let us explain the role that Riemann mappings are going to play
in the proof. To do this let us give a brief sketch of the proof of Theorem~\ref{existence}.
To begin with recall that a holomorphic map is said to be {\it univalent}\, on a domain $U$ if it is
one-to-one on $U$, i.e. if it provides a holomorphic diffeomorphism from $U$ onto its image.

Now fix a pair of elements $f, \, g \in \diff$ and a reduced non-trivial word $W (a,b)$ on $a,b$ and their inverses. 
Suppose that $W(f,g) = {\rm id}$ with the natural substitutions $a=f$, $b=g$. Then for every $z$ in a small neighborhood of the origin we
have that $W(f,g)(z) = z$, where $W(f,g)$ is naturally identified to a local diffeomorphism fixing $0 \in \C$.
We need to find an element $\hat{h}_W \in \fdiff$ such that the formal series at the origin arising from
the formal composition $W(f,\hat{h}^{-1} \circ g \circ \hat{h})$ is not reduced to $z$. In fact, we shall
construct an element $h$ belonging to $\diff$, i.e. a convergent power series, satisfying this requirement.
The idea to obtain the desired element $h \in \diff$ is as follows. Let $z \neq 0$ be a point sufficiently
close to $0 \in \C$ so that its itinerary $z_0, z_1, \ldots, z_k$ with respect to the word $W$ is well-defined.
By the {\it itinerary}\, of $z$ it is meant the sequence of points obtained in the following way: set
$W(f,g) = \circ_{i=1}^k a_i$ where $a_i \in \{f^k, (f^{-1})^k, g^j, (g^{-1})^j, j \in \N\}$, where
$f^j, (f^{-1})^j$ (resp. $g^j, (g^{-1})^j$) stands for $f \circ \cdots \circ f$, $f^{-1} \circ \cdots
\circ f^{-1}$ (resp. $g \circ \cdots \circ g$, $g^{-1} \circ \cdots \circ g^{-1}$) $j$ times. Note that
every subsequence of consecutive $f$'s and/or of $f^{-1}$'s (resp. $g$'s and/or of $g^{-1}$'s) are grouped
to be considered as a unique element. The reason to do this will become clear below. The itinerary of $z$
is then given by the sequence $z_0 = z$, $z_i = a_i(z_{i-1})$, for all $i = 1, \ldots, k$. The assumption
that $W(f,g) = {\rm id}$ then implies that $z_k = z$ provided that $z$ is sufficiently close to $0 \in \C$.
Naturally the formal series at $0 \in \C$ associated to $W(f,h^{-1} \circ g \circ h)$ coincides with the
Taylor of the local diffeomorphism $\circ_{i=1}^k b_i$ at the origin, where $b_i$ is defined by
\begin{align*}
b_i &= a_i \qquad \qquad \qquad  \quad \,  \text{if} \, \, a_i = f^j \, \, \text{or} \, \, a_i = (f^{-1})^j \, \, \text{for some} \, \, j \in \N\\
b_i &= h^{-1} \circ a_i \circ h \qquad \text{otherwise} \, .
\end{align*}
Thus it suffices
to construct $h$ fulfilling the following condition: there exists a connected open neighborhood $U$ of
$0 \in \C$ where all maps $b_i$ are defined, as well as their compositions appearing
in the writing of $W(f,h^{-1} \circ g \circ h)$, and a point $z \in U$ such that $W(f,h^{-1} \circ g \circ h)(z)
\neq z$. To construct a local diffeomorphism $h$ satisfying this last condition, we proceed as follows.

Let us denote by $r$ the maximum of the distances from $\{z_i\}_{i=1, \ldots, k}$ to the origin. Modulo
performing a change of coordinates, we can assume without loss of generality that $r$ is attained by a
{\it unique}\, index $i_0$. Besides we have:
\begin{enumerate}
\item $0 < i_0 < k$
\item $a_{i_0 + 1} \in \{g^j, (g^{-1})^j, j \in \N)\}$
\end{enumerate}
As to condition~(2), observe that it does not affect the generality of the discussion since the group
generated by $f$ and $h^{-1} \circ g \circ h$ is conjugate to the group generated by $h \circ f \circ h^{-1}$
and $g$. In other words, we can permute the roles of $f$ and $g$ if necessary. Concerning condition~(1) we
should first note that $W(f,g) = a_k \circ \cdots \circ a_2 \circ a_1$ is the identity map if and only if
so is $\bar{W}(f,g) = a_1 \circ a_k \circ \cdots \circ a_2$. Now take $w = z_1$. The itinerary of $w$ with
respect to the new word $\bar{W}(f,g)$ coincides with the itinerary of $z$ with respect to $W(f,g)$. Indeed
the sequence of points involved in these itineraries are such that $w_i = z_{i+1}$ for all $0 \leq i < k$
and $w_k = z_1$. Now the set of points belonging to the itinerary of $w$ is such that the maximum of their
distance to the origin occurs at $w_{i_0-1}$. Summarizing condition~(1) can be assumed without loss of generality.
Indeed the argument allows us to assume in the sequel that $i_0 = k-1$.

Let $0 < \rho < r$ be such that the disk of radius $\rho$, $B_{\rho}$, contains all $\{z_i\}_{i=1, \ldots, k}$
with exception of $z_{i_0} = z_{k-1}$. The idea is to construct a univalent function $h$ that is ``very close"
to the identity map in $B_{\rho}$ and sends $z_{k-1}$ to a point $w_{k-1}$ sufficiently far from $z_{k-1}$ so
that $a_{k}(z_{k-1})$ and $a_{k}(w_{k-1})$ are ``far" from each other, but not ``too far" in the sense that
$a_{k}(w_{k-1})$ must still belong to $B_{\rho}$. Since $h$, and consequently $h^{-1}$, are ``close to
the identity" in $B_{\rho}$ it follows that the sequence of points formed by the itinerary of $z_0$ w.r.t.
$W(f, h^{-1} \circ g \circ h)$ will no longer be closed. Besides the diffeomorphism (univalent function) $h$ will be so that all the relevant
maps will still be defined on $B_{\rho}$. The construction of $h$ will be carried out with
the help of a sequence of Riemann mappings for appropriate simply connected domains. These domains are going to
be chosen so that the convergence on $B_{\rho}$ to the identity of the corresponding Riemann mappings will follow from
the classical theorem due to Caratheodory.

Now let us start to turn the above ideas into accurate statements. Up to a rotation and a re-scaling of coordinates we can assume
the following: the point $z_{k-1}$, i.e. the point of the sequence $\{z_i\}_{1 \leq i \leq k}$ with greatest
absolute value, is equal to~$1$. Besides both $f, \, g$ are defined on a disk of radius strictly greater than~$1$.
Fix $\delta > 1$, $0 < \varepsilon < 1$ and let
$D = \{z \in \C : |z| < 1\} \cup \{z \in \R : 1 \leq z \leq \delta\}$. We shall first prove:

\begin{prop}\label{sequencefunctions}
There exists a sequence of (holomorphic) univalent functions $H_m$ defined on a small neighborhood of the closed unit disk
and satisfying the following:
\begin{enumerate}
\item $H_m$ converges uniformly to the identity map on the disk of radius $1 - \varepsilon$.

\item $H_m(1) = \delta$.
\end{enumerate}
\end{prop}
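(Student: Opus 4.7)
The strategy is to realize each $H_m$ as the analytic continuation of a Riemann mapping from $\D$ onto a shrinking family of conjugation-symmetric Jordan domains $D_m \supset \overline{\D}$ whose Carath\'eodory kernel at $0$ is $\D$.

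Concretely, for each $m \in \N^{\ast}$ I would construct $D_m$ to be a simply connected domain satisfying: $D_m$ is symmetric under complex conjugation; $\partial D_m$ is an analytic Jordan curve; $\D \subset D_m$; $D_m \cap \R = (-1,\delta)$; and $D_m \setminus \overline{\D}$ is contained in the $1/m$-neighborhood of the segment $[1,\delta]$. A concrete realization is obtained by smoothly rounding off, in a conjugation-symmetric way, the union of $\D$ with an open ellipse having $1$ and $\delta$ as foci and eccentricity tending to $1$ with $m$. Let $\phi_m : \D \to D_m$ be the Riemann mapping normalized by $\phi_m(0) = 0$ and $\phi_m'(0) > 0$. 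Since $D_m$ is conjugation-symmetric, $z \mapsto \overline{\phi_m(\overline z)}$ is another Riemann mapping onto $D_m$ with the same normalization; by uniqueness $\phi_m(\overline z) = \overline{\phi_m(z)}$. Consequently $\phi_m$ sends the real diameter $(-1,1)$ bijectively and monotonically onto the real slice $(-1,\delta)$ of $D_m$, and by Carath\'eodory's boundary extension theorem (valid for the Jordan domain $D_m$) we have $\phi_m(1) = \delta$. The analyticity of $\partial D_m$ then lets the Schwarz reflection principle extend $\phi_m$ holomorphically and univalently to some open neighborhood $V_m$ of $\overline{\D}$; this analytic extension agrees with the continuous boundary values, so setting $H_m := \phi_m\vert_{V_m}$ secures condition~(2).

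For condition~(1) I would invoke Carath\'eodory's kernel convergence theorem. The kernel of $\{D_m\}$ with respect to $0$ is exactly $\D$: every compact $K \subset \D$ lies in all $D_m$, every compact subset of $\C \setminus \overline{\D}$ disjoint from $[1,\delta]$ is eventually outside $D_m$, and the slit $[1,\delta]$ has empty interior. The theorem then gives $\phi_m \to \mathrm{id}$ locally uniformly on $\D$, hence uniformly on the compact set $\overline{B(1-\varepsilon)} \subset \D$, which is condition~(1).

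The main subtlety, in my view, is matching the boundary correspondence. Without the conjugation symmetry of $D_m$, the preimage $\zeta_m := \phi_m^{-1}(\delta) \in \partial\D$ could drift with $m$, forcing an extra rotation $z \mapsto \zeta_m z$ and requiring a separate prime-end argument (for the shrinking fjord around $[1,\delta]$) to show $\zeta_m \to 1$. Imposing conjugation symmetry on $D_m$ pins down the boundary correspondence along the real axis and sidesteps this difficulty entirely; the other ingredients (existence of analytic Jordan boundaries with the right kernel, Schwarz reflection, Carath\'eodory convergence) are then standard.
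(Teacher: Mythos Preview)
Your proposal is correct and follows essentially the same route as the paper: Riemann maps for a family of conjugation-symmetric simply connected domains $D_m$ that shrink to $\D$ with a thin protrusion toward $\delta$, the symmetry forcing the real axis to be preserved, and Carath\'eodory's kernel convergence theorem giving the uniform convergence to the identity on $\overline{B(1-\varepsilon)}$. The only tactical difference is that the paper takes the Riemann map in the direction $h_m:D_m\to\D$ and then post-composes its inverse with a homothety of ratio $p_m=h_m(\delta)\to 1$ (so that $H_m=h_m^{-1}\circ\lambda_{p_m}$ is defined on the disk $B_{1/p_m}\supset\overline{\D}$ and satisfies $H_m(1)=\delta$), whereas you arrange $\delta\in\partial D_m$ from the outset and invoke Schwarz reflection across the analytic boundary to extend $\phi_m$ past $\overline{\D}$; your version is arguably cleaner, though your concrete ellipse construction needs a small adjustment so that the rightmost real boundary point of $D_m$ is exactly $\delta$.
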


\begin{proof}
Let $\delta$ and $\varepsilon$ be constants as above and consider also the previously defined set $D$.
Let $\{\alpha_m\}_{m \in \N}$ be a decreasing sequence of positive reals converging to zero as $m$ goes to
infinity and denote by $D_m$ the neighborhood of $D$ constituted by those points of $\C$ whose distance
to $D$ is at most $\alpha_m$. Since $D_m$ is a simply connected domain there exists a unique univalent function $h_m$
mapping $D_m$ conformally onto the unit disc and satisfying $h_m(0) = 0$ and $h_m^{\prime}(0) = 1$.
We are going to prove that the sequence of functions $h_m$ converge uniformly to the identity map on the disc
of radius $1 - \varepsilon$ and that $h_m(\delta) = p_m$ where $p_m$ is a real number and $\{p_m\}$ converges to $1$ as $m$ goes to infinity.
This is going to be sufficient to derive the statement of the proposition. In fact, let $B_{1/p_m}$ be
the disk of radius $1/p_m (> 1)$ and let $\lambda_m$ denote the homothety of ratio $p_m$. The homothety $\lambda_m$
maps $B_{1/p_m}$ conformally onto the unit disk and satisfies $\lambda_m(1) = p_m$. Now it can easily be checked that
$H_m$ given by $H_m = h_m^{-1} \circ \lambda_m$ satisfies all the required properties.

Thus the proof of our proposition is reduced to checking that the Riemann maps $h_m$ fulfils the mentioned
conditions. Namely $h_m(\delta) = p_m$ where $p_m \rightarrow 1$ as $m \rightarrow \infty$ and $h_m$ converges
uniformly to the identity on the disc of radius $1 - \varepsilon$. For this we shall be led to consider Riemann's
original argument which is slightly more quantitative than the standard modern proof. This quantitative aspect
will be important to show that $h_m(\delta)$ converges to~$1$ (whereas this convergence is highly intuitive).
Thus, following Riemann, denote $z$ by $x + iy$, where $x, y \in \R$, and consider the boundary
value problem
\[
\begin{cases}
\frac{\partial ^2 u}{\partial x^2} + \frac{\partial^2 u}{\partial y^2} = 0\\
u(x,y) = {\rm log} |x + iy | \, \, \, , \, \, \, (x,y) \in \partial D_n
\end{cases}
\]
The elementary theory of the Dirichlet problem ensures the existence of a unique solution for this boundary
value problem that, in addition, is bounded on $D_m$ (see for example \cite{who}). On the other hand, since
$u$ satisfies the Laplace equation, it follows the existence of a unique function $v$ defined on $D_m$ and
satisfying
\begin{equation}\label{CauchyRiemann}
\frac{\partial v}{\partial x} = -\frac{\partial u}{\partial y} \, \, \, \, \,  , \, \,  \, \, \,
\frac{\partial v}{\partial y} = \frac{\partial u}{\partial x} \, \, \, \, \, , \, \, \, \, \,
v(0,0) = 0 \, .
\end{equation}
Next set
\[
h_m(x+iy) = (x+iy) e^{-u(x,y) - iv(x,y)} \, ,
\]
and note that it satisfies the Cauchy-Riemann equations (Equation~\ref{CauchyRiemann}) so that $h_m$ is a
holomorphic function on $D_m$. Moreover $h_m$ can be written in the form
\[
h_m(x+iy) = e^{-p(x,y) - i q(x,y)} \, ,
\]
where $p(x,y) = u(x,y) - {\rm log} |x + iy|$ and $q(x,y)  = v(x,y) - {\rm arg} (x + iy )$ are defined
away from the origin. Riemann then proves that $h_m$ maps conformally $D_m$ into the unit disc. It follows,
in particular, that the level curves of $p$ are taken one-to-one into the circles of center at the origin.
Moreover the level of $p$ increases as the radius of its image decreases. The boundary of $D_m$ is in
particular sent to the boundary of the unit disk.

To prove that the sequence of Riemann mappings $h_m$ satisfies the conditions mentioned above, we first note
that $D_m$ is symmetric with respect to the real axis. Besides the boundary condition is also symmetric with
respect to the real axis. It then follows that the solution $u$ to the Dirichlet problem in question is
symmetric to the real axis as well. As a consequence $h_m$ leaves this axis invariant (for every $m \in \N$).
As already mentioned the levels of $p$ increases whereas the radius of its image decreases. We note however
that the levels of $p$ grows as we approach the origin $(0,0) \in \R^2$. This means that the segment contained
in the real axis joining $0$ to $\delta + \alpha_m$ is taken to the unit interval $[0,1]$ (contained in the real
axis) by an one-to-one monotone increasing real function. To conclude that $h_m (\delta)$ converges to~$1$
is now sufficient to show the maps $h_m$ converges uniformly to the identity on every compact subset of the
unit disk.

The uniform convergence will be derived from the statement of the classical Caratheodory convergence theorem
concerning convergence
of univalent functions, cf. for example \cite{duren}. Consider the above sequence of simply connected domains
$\{D_m\}_{m \in \N}$. Each $D_m$ is a proper domain of $\C$ containing the origin and $(h_m)^{-1}$ maps
conformally the unit disk into $D_m$. Moreover $(h_m)^{-1}(0) = 0$ and $h_m^{\prime}(0) = 1$. It is immediate
to check that $\mathbb{D} = \{z \in \Z : |z| < 1\}$ corresponds to the Kernel of $\{D_m\}_{m \in \N}$. Furthermore
every subsequence of $\{D_m\}_{m \in \N}$ has the same Kernel which means that $\{D_m\}$ converges to $\mathbb{D}$.
In particular $(h_m)^{-1}$ converges uniformly to a holomorphic function $g$ on each compact subset of $\mathbb{D}$.
Now note that $g$ maps $\mathbb{D}$ conformally onto $\mathbb{D}$, i.e. $g$ is an automorphism of the unit disk.
Moreover $g(0) = 0$ and $g^{\prime}(0) = 1$ since so does $(h_m)^{-1}$ for all $m \in \N$. This ensures that $g$ is
the identity map. It is also part of the statement of Caratheodory's theorem that $h_m$ converges uniformly on each
compact subset of $\mathbb{D}$ to the inverse of $g$, $g^{-1}$, i.e. it also converges to the identity map. In
particular $h_m$ is close to identity on the disk of radius $1 - \varepsilon$ for all $m \geq N$. This completes the
proof.
\end{proof}

\begin{obs}
\label{tangenttoidentity}
{\rm The contents of this remark will only be needed in Section~5 in connection with the use of Krull topology in the statement
of Theorem~C. Fixed $N \in \N^{\ast}$, we want to point out that the sequence $H_m$ constructed in the preceding proposition
can be supposed, in addition, to satisfy the following condition:

\noindent \hspace{1.0cm} $\bullet$ Each $H_m$ is tangent to the identity to order $N$ at $0 \in \C$.

\noindent To justify our claim, let $H_m$ be the sequence given by the proposition in question. In particular $H_m (0) =0$ and
$H_m^{\prime} (0) =1$. Since $N$ is fixed, we set
$$
H_m (z) = z + c_2^m z^2 + c_3^m z^3 + \cdots + c_N^m z^N + \cdots \, .
$$
For every $j \in \{2,\ldots ,N\}$ the sequence of coefficients $\{ c_j^m \}_{m \in \N^{\ast}}$ converges to zero as an immediate
consequence of Cauchy formula since $H_m$ converges to the identity on the disc of radius $1-\epsilon < 1$. Let then $R_m$
be the polynomial $R_m (z) = z+ c_2^m z^2 + c_3^m z^3 + \cdots + c_N^m z^N$ and denote by $S_m$ its inverse map. It is clear that
both $R_m , \, S_m$ converge to the identity on (fixed) arbitrarily large discs about $0 \in \C$. Hence the new sequence of maps
$\tilde{H}_m$ defined by
$$
\tilde{H}_m = S_m \circ H_m
$$
is tangent to the identity to order $N$ at $0 \in \C$. Besides this sequence clearly converges to the identity on the disc of radius
$1-\epsilon$. Finally, though we cannot ensure that $\tilde{H}_m (1) = \delta$, we know that $\tilde{H}_m (1) \rightarrow \delta$ and this
will suffice for our purposes.}
\end{obs}

We are now ready to prove the main result of this section. Let us however mention that the proof given below makes use of the
possibility of choosing a point $z$ arbitrarily close to the origin. This will be used to guarantee that the domains of definitions of certain
maps are connected and do contain the origin. A more general argument will be supplied in the next section.

\begin{proof}[Proof of Theorem~\ref{existence}]
Fix a pair of holomorphic diffeomorphisms $f, g \in \diff$. We shall prove that for every non-trivial
reduced word $W (a,)$ (different from $a^k$ and from $b^l$, $k,l \in \Z$),
there exists a formal series $\hat{h}_W \in \fdiff$ such that the element of $\fdiff$ corresponding to
$W(f, \hat{h}_W^{-1}\circ g \circ \hat{h}_W)$ does not coincide with the identity. So, from now on,
let as assume that a word as above $W$ is fixed and that $W(f,g) ={\rm id}$. Indeed $\hat{h}_W$
will be realized as the Taylor series of a convenient local diffeomorphism $h_W$.

To prove the existence of the diffeomorphism  $h_W$ let us follow the scheme described above. Fix a point
$z$ in a neighborhood of the origin and let $\{z_i\}_{i=0, \ldots, k}$ be the sequence of points formed by
its itinerary ($z = (z_0)$). Let $i_0$ denote the index for which $\{|z_i|\}_{i=0, \ldots, k}$ attains its strict
maximum, i.e. $\vert z_i \vert < \vert z_{i_0} \vert$ provided that $i \neq i_0$.
As already mentioned, there is no loss of generality in assuming that $i_0 = k - 1$ and
that a ``power" of $g$ or of $g^{-1}$ will be applied next. Moreover, up to rotating and re-scaling coordinates
we can also suppose that $z_{k-1}$ is
equal to~$1$ and that both $f, \, g$ are defined on a disk of radius greater than~$1$. Finally let $0 < \rho
< 1$, close to~$1$, be such that all $\{z_i\}_{i=0, \ldots, k}$ but $z_{k-1}$ are contained in $D_{\rho}$.

Let $U_{k-1}$ be a small disc centered at $z_{k-1}$ satisfying the following conditions:
\begin{itemize}
\item $a_k (U_{k-1}) \subseteq D_{\rho}$

\item $U_j = \circ_{i = k-1}^j a_i^{-1} (U_{k-1})$ is contained in $D_{\rho}$ for all $1 \leq j \leq k-1$.

\item The sets $\{U_j\}_{j = 1, \ldots, k-1}$ are pairwise disjoint.
\end{itemize}
By assumption $|z_i| < \rho$ for every $i \ne k-1$ implying that $U_{k-1}$ can be chosen as a non-empty
open set. Next fix $\delta \in \R$ ($\delta > 1$) as ``lying almost in the boundary of $U_{k-1}$" in the
sense that $(\delta - 1)/r$ is close to~$1$, where $r$ stands for the radius of $U_{k-1}$. Next consider
the elements defined in the following way: $\delta_{k-1} = \delta$ and $\delta_k = a_k (\delta_{k-1})$.
Naturally we have that $\delta_k$ belongs to $U_k$ but it is ``far" from $z_k$ in the sense that its
distance is bounded from below by a positive constant.

Let $\delta$ be as above and choose $\varepsilon$ to be equal to $(1-\rho)/2$. Now Proposition~\ref{sequencefunctions}
guarantees the existence of a sequence of univalent functions $H_m$, whose domains of definitions contain the unit
disc, which converges uniformly to the identity map on $B_{1-\varepsilon}$ and satisfies $H_m(1) = \delta$ for every $m$.

For $m$ sufficiently large, let us consider the elements in $\diff$ given by
\begin{align*}
b_i &= a_i \qquad \qquad \qquad  \quad \,  \text{if} \, \, a_i = f^j \, \, \text{or} \, \, a_i = (f^{-1})^j \, \, \text{for some} \, \, j \in \N\\
b_i &= H_m^{-1} \circ a_i \circ H_m \qquad \text{otherwise} \, .
\end{align*}
Note that although the domain of definition of $H_m$ is the disc of radius $1/p_m$ the same
does not apply to the map $b_i$, in the case that $a_i = g^j$ or $a_i = (g^{-1})^j$ for some $j \in \N^{\ast}$.
In fact $H_m$ maps $B_{1/p_m}$ conformally onto $D_m$ (the neighborhood of $D$ whose distance to $D$ is
at most $\alpha_m$) which means that the domain of definition of $H_m^{-1}$ is $D_m$. However the image of
$D_m$ by $a_i$ is not necessarily contained in $D_m$. Let $W_{i,m} = \{z \in D_m \, ; \; a_i(z) \in D_m\}$. The
domain of definition of $b_i$ is therefore $U_{i,m} = H_m^{-1}(W_{i,m})$.

Whereas the open set $U_{i,m}$ contains the origin, it is important to realize that it may be disconnected, i.e.
$a_i(D_m) \cap D_m$ may admit two or more connected components. However, in our case, this possibility
can be ruled out. Indeed, modulo re-scaling coordinates again (i.e. modulo working sufficiently near the origin), we may assume
that $g^j$ is defined on a neighborhood of the unit disc $D$. Besides, $g$ being a local diffeomorphism of $(\C, 0)$, a sufficiently small
disc $V$ about the origin is automatically taken by $g^j$ (or by $g^{-j}$) to a
star-shaped set $g^j(V)$. From this it follows that $W_{i,m}$ must be connected
so that the construction of $H_m$ detailed in Proposition~\ref{sequencefunctions} implies that $U_{i,m}$ is connected as well.
Next set
$U_m = \cap_{i \in I} U_{i,m}$ where $I$ denote the set of $1 \leq i \leq k$ as above, i.e. of $i$ such
that $a_i = g^j$ or $a_i = (g^{-1})^j$ for some $j$. The open set $U_m$ is also a connected neighborhood
of the origin. We define $h_m$ as being the restriction of $H_m$ to $U_m$.

Since $H_m$ converges to the identity map as $m$ increases, it is easy to check that
$w_k$, defined by $w_k = b_k (z_{k-1})$, becomes
arbitrarily close to $\delta_k$. In particular there exists $m \in \N$ so that $w_k$ is distinct
from $z_k$.

To finish the proof we just need to make some further comments. So far only the last point in
the itinerary of the initial point w.r.t. the word $W(f, H_m^{-1} \circ g \circ H_m)$ has been taken
into consideration.
In fact, we only kept track of the part of this itinerary coming after the point
of index $i_0$, i.e. of index $k-1$. However the set $\{a_i\}_{1 \leq i \leq k-2}$ may already contain
elements of the form $g^j$ or $(g^{-1})^j$ for some $j \in \N$. This means that taking $w_0 = z = z_0$,
the element $w_{k-1}$ of the sequence defined by $w_{i+1} = b_{i+1}(w_i)$ does not necessarily coincides
(and generically does not coincide) with the complex number~$1$. The element $w_{k-1}$ is not far from~$1$
but we have no total control on its image by $H_m$ or better by $h_m$. The easiest way to deal with this
difficulty is by ``slightly perturbing the departing point" as follows: assuming that $w_{k-1} = 1$ we go
backwards by following $b_i^{-1}$ to find out which point must be $w_0$. Observe that $a_{k-1}$ is given
by a ``power" of $f$ or of $f^{-1}$, by construction. This guarantees that $w_{k-2}$ belongs to $B_{\rho}$.
Furthermore it also shows that $w_{k-2}$ coincides with $z_{k-2}$. Since $h_m$ is very close to the identity
map on $B_{1-\varepsilon}$, and therefore in $B_{\rho}$, it follows that $w_i$ is ``very close" to $z_i$ for
every $i \in \{0, \ldots , k-3\}$
In fact, they can be made arbitrarily close modulo taking $m$ sufficiently large. We then consider $z = w_0$
instead of $z_0$ as departing point and the theorem follows. It only remains to ensure that all the $w_i$,
$0 \leq i \leq k-2$, constructed as above belong to the domain of definition of $b_{i+1}$. This is equivalent
to ensuring that all the $z_i$ belong to the same domain since $z_i$ and $w_i$ become close to each
other as $h_m$ converges to the identity map on $B_{\rho}$.

Fix $i \in \N$ and consider the element $z_i$ on the itinerary of $z_0$. It is sufficient to consider the
case where $a_i = g^j$ or $a_i = (g^{-1})^j$ for some $j$. Assume for a contradiction that $z_i$ does not
belong to the domain of definition of $b_{i+1}$.
This is equivalent to saying that $(g^j \circ H_m)(z_i)$ (or
$((g^{-1})^j \circ H_m)(z_i)$) does not belong to $D_m$. Hence neither does the point $z_{i+1} = g^j(z_i)$
since $H_m$ is close to the identity map on $B_{\rho}$. In particular $z_{i+1}$ does not belong to the
unit disc $D$. This contradicts the assumption that $z_{k-1} = 1$ is the point of the itinerary of $z_0$ having
greatest distance to the origin. The proof of the theorem is completed.
\end{proof}

As already pointed out, the preceding proof depends on the connectedness of the set $U_m$ what, in turn, can
easily be ensured modulo working ``very close to $0 \in \C$''. However the possibility of ``reducing the neighborhood of $0 \in \C$
will no longer be available in the proof of, for example, Theorem~A. Thus in the next section we shall provide a more general
argument that dispenses with the connectedness of sets playing a role analogous to the role of $U_m$ in the discussion above.

\section{Proofs for Theorems~A and~B}

Again let $W(f,g)$ be a reduced word on $f,g$ that is not of the forms $f^k$ or $g^l$ with
$k,l \in \Z$. The combination
of Theorem~\ref{AlgebraicTh} with Theorem~\ref{existence} implies the existence of an open dense set $\mathcal{H}_W\subset \diff$ such that,
whenever $h \in \mathcal{H}_W$, the composition $W(f,  h^{-1} \circ g \circ h)$ defines a local holomorphic diffeomorphism on a neighborhood
of $0 \in \C$ which does not coincide with the identity (on a neighborhood of the origin itself).

We say that $f$ (resp. $g$) is a diffeomorphism of period $k \in \N^{\ast}$ (resp. $l \in \N^{\ast}$) if $f^k ={\rm id}$ (resp. $g^l ={\rm id}$)
on a neighborhood of $0 \in \C$. Similarly $f$ is said to be of infinite order if $f^k$ does not coincide with the identity
on a neighborhood of $0\in \C$ for every $k \in \N^{\ast}$. In other words, the germ induced by $f^k$ at $0 \in \C$ is not the identity.
Naturally a similar definition applies to $g$. With this terminology, the above mentioned results immediately yield the following theorems:

\begin{teo}
\label{almostfinal1}
Suppose that $f,g$ as above are local diffeomorphism of infinite order and consider $\diff$ as the group of germs of local diffeomorphisms.
Then there exists a $G\delta$-dense set $\mathcal{H} \subset \diff$ such that,
whenever $h \in \mathcal{H}$, the group generated by the elements $f, h^{-1} \circ g \circ h$ is free in $\diff$. In other words, every reduced non-trivial word
$W(f,g)$ defines a local diffeomorphism that does not coincide with the identity on a neighborhood of $0\in \C$.
\end{teo}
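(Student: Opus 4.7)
The plan is to derive Theorem~\ref{almostfinal1} as a direct consequence of the machinery already in place, namely Theorem~\ref{AlgebraicTh}, Theorem~\ref{existence}, and the Baire property of the analytic topology recorded in Lemma~\ref{takens}. The content of the statement is essentially that of Theorem~\ref{representation}, upgraded from \emph{dense} to \emph{$G_{\delta}$-dense}, together with the observation that the infinite order hypothesis on $f$ and $g$ rules out the two ``exceptional'' families of words allowed by Theorem~\ref{existence}.

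First I would argue that under the infinite order hypothesis there is \emph{no} universal relation for the pair $(f,g)$ whatsoever. Indeed, any universal relation is in particular a relation in $\diff$, i.e. a non-trivial reduced word $W(a,b)$ with $W(f,g)={\rm id}$. Theorem~\ref{existence} asserts that the only candidates for universal relations are reduced words of the form $a^k$ or $b^l$ with $k,l\in \Z\setminus\{0\}$. But since $f$ and $g$ have infinite order, $f^k$ and $g^l$ are never the identity germ for $k,l\neq 0$, so these words do not define relations at all. Hence the pair $(f,g)$ admits no universal relation.

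Next I would show the set $\mathcal{H}$ in the statement is $G_{\delta}$-dense. For each non-trivial reduced word $W(a,b)$, the previous step guarantees that $W$ is not a universal relation, so Theorem~\ref{AlgebraicTh} provides an open dense subset $\mathcal{H}_W\subset \diff$ (in the analytic topology) consisting of those $h$ with $W(f,h^{-1}\circ g\circ h)\neq {\rm id}$. Since the set of reduced words in two letters is countable, the intersection
\[
\mathcal{H} \;=\; \bigcap_{W} \mathcal{H}_W
\]
is a countable intersection of open dense sets. By Lemma~\ref{takens} the analytic topology on $\diff$ enjoys the Baire property, so $\mathcal{H}$ is $G_{\delta}$-dense in $\diff$.

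Finally, for any $h\in \mathcal{H}$ and any non-trivial reduced word $W$ in two letters, the substitution $a\mapsto f$, $b\mapsto h^{-1}\circ g\circ h$ yields $W(f,h^{-1}\circ g\circ h)\neq {\rm id}$ as germs at $0\in\C$; hence the map $W\mapsto W(f,h^{-1}\circ g\circ h)$ has trivial kernel and the subgroup generated by $f$ and $h^{-1}\circ g\circ h$ in $\diff$ is free of rank two. I do not expect any real obstacle in executing this plan: all the work has been done in Sections~2 and~3, and the only points requiring care are (i) verifying that infinite order converts Theorem~\ref{existence} into a genuine ``no universal relation'' statement, and (ii) recording that each $\mathcal{H}_W$ is in fact \emph{open} dense (not merely dense), which is needed so that the Baire argument yields a $G_{\delta}$-dense intersection rather than just a dense one.
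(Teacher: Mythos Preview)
Your proposal is correct and matches the paper's own argument essentially verbatim: the paper states, just before Theorem~\ref{almostfinal1}, that the combination of Theorem~\ref{AlgebraicTh} with Theorem~\ref{existence} yields open dense sets $\mathcal{H}_W$, and then asserts that ``the above mentioned results immediately yield'' the theorem, leaving the Baire intersection and the infinite-order observation implicit. You have simply written these implicit steps out.
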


\begin{teo}
\label{almostfinal2}
Suppose that $f,g$ as above have periods respectively $k,l \in \N^{\ast}$. Then, for $h$ belonging to some
$G\delta$-dense set $\mathcal{H} \subset \diff$, the group generated by $f, h^{-1} \circ g \circ h$ in the group of germs
of diffeomorphisms $\diff$ is isomorphic to the group defined by the presentation
$$
\langle a,b \; \, ; \; \, a^k =b^l ={\rm id} \rangle \, .
$$
\end{teo}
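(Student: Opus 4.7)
The plan is to adapt the proof of Theorem~\ref{representation} to the periodic setting. Instead of indexing the required open and dense sets by all reduced words in the free group $F_2 = \langle a, b \rangle$, I would index them by the reduced words in the free product $\Z/k \ast \Z/l = \langle a, b \mid a^k = b^l = {\rm id} \rangle$: namely, the alternating products $a^{i_1}b^{j_1}a^{i_2}\cdots$ (or those starting with $b$) in which each exponent satisfies $1 \leq i_s \leq k-1$ or $1 \leq j_s \leq l-1$. Every element of the subgroup of $\diff$ generated by $f$ and $h^{-1}\circ g \circ h$ is represented by such a normal form, because $f^k = {\rm id}$ and $(h^{-1}\circ g \circ h)^l = h^{-1}\circ g^l \circ h = {\rm id}$ hold in $\diff$.

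Single-syllable reduced words $a^i$ with $1 \leq i \leq k-1$ and $b^j$ with $1 \leq j \leq l-1$ evaluate to $f^i$ and to $h^{-1}\circ g^j \circ h$ respectively, and these are never the identity of $\diff$, since by assumption $f$ and $g$ have exact periods $k$ and $l$. Hence such words impose no condition on $h$. For a multi-syllable reduced word $W$ in $\Z/k \ast \Z/l$, viewed as a word in $F_2$ it is already reduced (all exponents are nonzero and adjacent syllables involve different generators) and it is not a pure power of $a$ alone nor of $b$ alone. Theorem~\ref{existence} then asserts that $W$ is not a universal relation for the pair $(f, g)$, and Theorem~\ref{AlgebraicTh} produces an open and dense subset $\mathcal{H}_W \subset \diff$ in the analytic topology consisting of those $h$ for which $W(f, h^{-1}\circ g \circ h) \neq {\rm id}$.

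Since the collection of multi-syllable reduced words in $\Z/k \ast \Z/l$ is countable, Lemma~\ref{takens} yields that $\mathcal{H} := \bigcap_W \mathcal{H}_W$ is $G_\delta$-dense in $\diff$. For every $h \in \mathcal{H}$ the canonical surjection $\Z/k \ast \Z/l \rightarrow \langle f, \, h^{-1}\circ g \circ h \rangle \subset \diff$ has trivial kernel, since every non-trivial element of the source admits a unique non-trivial reduced form which, by construction, is not sent to the identity of $\diff$; this surjection is therefore the required isomorphism. I do not anticipate any serious obstacle in this argument. The main conceptual point to articulate carefully is that Theorem~\ref{existence}, which is stated for reduced words in $F_2$ that are not pure powers of a single generator, actually covers every multi-syllable free-product reduced word; this is transparent once one observes that free-product normal form embeds tautologically into free-group reduced form, but it deserves explicit mention as the bridge between the two regimes.
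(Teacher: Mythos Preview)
Your proposal is correct and mirrors exactly what the paper does: the authors state that the combination of Theorem~\ref{AlgebraicTh} and Theorem~\ref{existence} gives, for each reduced word $W$ not of the form $f^k$ or $g^l$, an open dense set $\mathcal{H}_W$, and then declare that Theorems~\ref{almostfinal1} and~\ref{almostfinal2} ``immediately'' follow by intersecting these sets and invoking Baire. Your write-up simply spells out this ``immediately'' in the finite-order case, and your observation that multi-syllable normal forms in $\Z/k\Z \ast \Z/l\Z$ are already $F_2$-reduced and involve both letters is precisely the bridge the paper leaves implicit.
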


The preceding theorems however fall short to imply the statements given in the Introduction.
To explain the difference between the corresponding statements,
and also to prepare the way for the proofs of the latter, let us briefly recall the notion of {\it pseudogroup}.
Consider local diffeomorphisms $f,f^{-1}, g, g^{-1}$ as above that
are defined on an open neighborhood $V$ of $0\in \C$. We want to consider the {\it pseudogroup $\Gamma = \Gamma (f,g,V)$
generated by $f,f^{-1}, g, g^{-1}$ on $V$}
(in the sequel we shall only say the pseudogroup generated by $f,g$ and their inverses or simply by $f,g$ when no confusion is
possible). Let us make the definition of $\Gamma$ precise. Consider a reduced word $W(f,g)$ (in $f,g$ and in their inverses)
having the form $F_{s} \circ \cdots \circ F_{1}$ where each $F_{i}$, $i \in \{1, \ldots ,s\}$, belongs to
the set $\{ f^{\pm 1},g^{\pm 1}\}$. The domain of definition of
$W(f,g)$ as an element of $\Gamma$  consists of those points $z \in V$ such that
for every $1 \leq l < s$, the point $F_{l} \circ \cdots \circ F_{1} (z)$
belongs to $V$. Since $0\in \C$ is fixed by $f,g$, it follows that every word has a non-empty domain of
definition as element of $\Gamma$. By construction the domain of definition, besides non-empty, is also an open set.
It may however be {\it disconnected}. Therefore the preceding theorems only have a bear
in the connected components of these domains that contain $0 \in \C$. More precisely, suppose for example that $f,g$
are as in Theorem~\ref{almostfinal1}. Then, modulo conjugating $g$ by a generic element $h$, we can assume that all non-trivial
reduced words $W(f,g)$ as above are different from the identity {\it on the connected
component containing $0\in \C$ of their domains of definitions}. Since the domain of definition of $W(f,g)$ may
have more than one connected component, the
preceding results do not exclude the possibility of having the element $W(f,g)$ of $\Gamma$ coinciding with the identity
on a non-empty open subset of $V$. In particular
if $\Gamma$ is the pseudogroup associated to a foliation, we cannot yet derive the conclusions
about the topology of the leaves stated in Theorem~C. In fact, to complete the proof of Theorems~A and~B we need a
non-trivial strengthening of the results obtained in Sections~2 and~3.

To state the desired strengthened version of the mentioned results require however a little technical detour. For this let $f,g$ and $V$ be as
above so that the pseudogroup generated by $f,g$ (and their inverses) acts on $V$. Consider also a non-trivial reduced word $W(f,g)$
and denote by ${\rm Dom}_W (V)$ its domain of definition. To be able to take advantage of Baire property, we would like to have a statement
of type ``for an open dense set of local diffeomorphisms $h \in \diff$, the element $W(f,h^{-1} \circ g \circ h)$ of the pseudogroup generated
by $f, h^{-1} \circ g \circ h$ does not coincide with the identity on any connected component of ${\rm Dom}_W (V)$''. This statement however
makes no sense since the domain of definition of a given local diffeomorphism $h$ may be smaller than $V$ so that the pseudogroup
generated by $f, h^{-1} \circ g \circ h$ will not be defined on the whole $V$. This is the main reason explaining why
a more careful formulation of our statements is needed.

Let us begin by fixing an open disc $D$ about $0 \in \C$ such that $f,g, f^{-1}, g^{-1}$ are defined and univalent
on a neighborhood of $D$. As usual $\overline{D}$ will stand for the closure of $D$ while $\partial D$ will denote the
boundary of $D$.

Next we choose and fix once and for all a sequence $\{p_n\}_{n \in \N} \subset \C$ dense in $\C$ and such that no point
$p_n$ lies in $\partial D$.
Fixed $n$ and given a word $W (f,g)$ in $f,g$ and their inverses, let $\mathcal{U}_{n,W}$ denote the set formed by those
local diffeomorphisms $h$ at $0 \in \C$ for which one of the two possibilities below is verified:
\begin{enumerate}
  \item $p_n$ does not belong to the domain of definition of $W(f,h^{-1} \circ g \circ h)$ viewed as an element of
  the pseudogroup generated by $f,g$ {\it on the closed disc $\overline{D}$}.

  \item $p_n$ belongs to the domain of definition of $W(f,h^{-1} \circ g \circ h)$ viewed as an element of
  the pseudogroup generated by $f,h^{-1} \circ g \circ h$ {\it on the open disc $D$}. Furthermore $W(f,h^{-1} \circ g \circ h)$
  is required {\it not to}\, coincide with the identity on a neighborhood of $p_n$.
\end{enumerate}

As always we suppose that $W(f,g)$ is not reduced to an integral power of $f$ or $g$ (or of their inverses).
With this standard assumption we shall prove the following:

\begin{prop}
\label{lastversion1.1}
The set $\mathcal{U}_{n,W} \subset \diff$ is open and dense for the analytic topology.
\end{prop}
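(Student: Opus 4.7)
\textbf{Plan for Proposition~\ref{lastversion1.1}.}

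Openness of $\mathcal{U}_{n,W}$ follows from continuity of finite compositions on compact sets. For condition~(1), an intermediate image $F_l\circ\cdots\circ F_1(p_n)$ lies in the open set $\C\setminus\bar{D}$, a property stable under small analytic perturbations of $h$ since the composition depends continuously on $h$ on any compact where it is defined. For condition~(2), the full itinerary of $p_n$ lies at positive distance from $\partial D$ (stable), and the non-identity condition is equivalent to the existence of a point $q$ arbitrarily close to $p_n$ with $W(f,h^{-1}\circ g\circ h)(q)\neq q$, which persists under perturbations by continuity of $W(f,h^{-1}\circ g\circ h)$ in $h$.

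For density, fix $h_0\notin\mathcal{U}_{n,W}$. The negation splits into two cases: (a)~some intermediate image of the $p_n$-itinerary lies on $\partial D$, or (b)~the itinerary stays in the open disc $D$ and $W(f,h_0^{-1}\circ g\circ h_0)$ is identically the identity on a neighborhood of $p_n$. Case~(a) is easy: an arbitrarily small analytic perturbation of $h_0$ pushes the offending image strictly off $\partial D$, either into $\C\setminus\bar{D}$ (placing $h$ in $\mathcal{U}_{n,W}$ via~(1)) or into $D$ (reducing to case~(b) or directly giving~(2)). The substance of the proof is case~(b).

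In case~(b), let $z_0=p_n, z_1,\dots,z_k=p_n$ denote the itinerary of $p_n$ under the word $W(f,g)=a_k\circ\cdots\circ a_1$ applied to $f$ and $g_0=h_0^{-1}\circ g\circ h_0$. After cyclically permuting the letters of $W$ (which does not change whether $p_n$ is a fixed point), we may assume that $\max_i|z_i|$ is attained uniquely at $z_{k-1}$ and that $a_k$ is a power of $g$. We then rerun the Carath\'eodory/Riemann-map construction of Proposition~\ref{sequencefunctions}, with the role played there by the point~$1$ now played by $z_{k-1}$: produce a sequence of univalent maps $R_m$ converging to the identity on a disc containing $0$ and every $z_i$ except $z_{k-1}$, while $R_m(z_{k-1})$ is displaced by a definite controlled amount. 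Using Remark~\ref{tangenttoidentity} to make each $R_m$ tangent to the identity at $0$ up to arbitrarily high order $N$, and Remark~\ref{none} for the continuity of the Taylor-jet projection and its polynomial section, we realize these $R_m$ as perturbations of $h_0$ in the analytic topology at the level of Taylor jets: the resulting $h$ lies arbitrarily close to $h_0$ in $\diff$, and the displacement of $z_{k-1}$ followed by $a_k$ forces the new itinerary of $p_n$ not to close up. Therefore $W(f,h^{-1}\circ g\circ h)(p_n)\neq p_n$, which places $h\in\mathcal{U}_{n,W}$ via~(2) and yields density.

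\textbf{Main obstacle.} The principal difficulty, flagged at the end of Section~3, is that on the fixed disc $D$ the domain of the pseudogroup element $W(f,h^{-1}\circ g\circ h)$ need not be connected, so one cannot simply appeal to analytic continuation from $0$ and invoke Theorems~\ref{almostfinal1}--\ref{almostfinal2}. The fix is to track only the finite itinerary of $p_n$: its positive distance to $\partial D$ gives enough slack to keep each intermediate image inside $D$ under a sufficiently small $R_m$-perturbation, no matter how disconnected the domain is. A secondary subtlety is that the analytic topology is not a group topology (Remark~\ref{topologicalgroup}), so transferring an $R_m$-perturbation to a perturbation of $h_0$ must be done at the level of Taylor jets (via Remarks~\ref{none} and~\ref{tangenttoidentity}) rather than by one-sided composition.
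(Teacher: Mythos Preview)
Your overall architecture is right (openness via continuity; the two-case split for density; handling the boundary case~(a) by a small push; the Carath\'eodory construction for case~(b)), but the heart of the argument --- passing from the Riemann-map perturbation $R_m$ to an element close to $h_0$ \emph{in the analytic topology} --- is not addressed, and your appeal to Remarks~\ref{none} and~\ref{tangenttoidentity} does not bridge that gap.

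The difficulty is this: the maps $H_m$ (or their truncations $R_m$) converge to the identity only on compact subsets of the open disc $B(\|z_{k-1}\|)$, while the analytic topology demands $\|h-h_0\|_r<\varepsilon$ for an \emph{arbitrary} prescribed $r$, possibly much larger than $\|z_{k-1}\|$. Making $R_m$ tangent to the identity to high order at $0$ does not make its higher Taylor coefficients small, so neither $h_0\circ R_m$ nor any finite-jet manipulation of it is forced close to $h_0$ on a large disc. The paper confronts this in three additional steps you are missing. First, because the extremal itinerary point $z_{k-1}$ lies on the boundary of the region where $H_m$ is controlled, one replaces $p_n$ by a nearby auxiliary point $q_n$ in the \emph{same} connected component of the domain of $W(f,h_0^{-1}\circ g\circ h_0)$, chosen so that its entire itinerary under $W(f,H_m^{-1}\circ h_0^{-1}\circ g\circ h_0\circ H_m)$ sits in a fixed compact ${\bf B}$ of that smaller disc. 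Second, after truncations one produces $S_m=h_0+Q_m$ with $Q_m$ a polynomial that is small on ${\bf B}$ (but with uncontrolled degree and coefficients), and then uses the one-parameter family $S_m^t=h_0+tQ_m$: the map $t\mapsto W(f,(S_m^t)^{-1}\circ g\circ S_m^t)(q_n)-q_n$ is analytic on a neighborhood of $[0,1]$, nonzero at $t=1$, hence nonzero for arbitrarily small $t$; and for small $t$, $S_m^t\to h_0$ in the analytic topology since $\|tQ_m\|_r=t\|Q_m\|_r\to 0$ for every fixed $r$. Third, a stability lemma (the paper's Lemma~\ref{continuousmoving}) ensures that for $t$ small the connected components of the domain of $W(f,(S_m^t)^{-1}\circ g\circ S_m^t)$ are close to those for $h_0$, so $p_n$ and $q_n$ still lie in the same component $\tilde U_1^t$, whence the non-identity at $q_n$ forces non-identity near $p_n$. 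None of these three ingredients --- the auxiliary point $q_n$, the analytic-in-$t$ interpolation, and the component-stability argument --- appears in your plan, and without them the sentence ``the resulting $h$ lies arbitrarily close to $h_0$ in $\diff$'' is unjustified.
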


Note that the set $\mathcal{U}_{n,W}$ is clearly open since $p_n \in \C \setminus \partial D$. Thus the non-trivial
part of the proof of Proposition~\ref{lastversion1.1} consists of showing that $\mathcal{U}_{n,W}$ is dense in the
analytic topology. Assuming for the time being that the statement of this proposition holds and recalling that
a countable union of countable sets is itself countable, we can define a
$G_{\delta}$-dense set $\mathcal{U} \subseteq \diff$ by setting
$$
\mathcal{U} = \bigcap_{W (f,g)} \bigcap_{n=1}^{\infty} \mathcal{U}_{n,W} \, .
$$
Now suppose we are given $h \in \mathcal{U}$
and consider a neighborhood $V \subset D$ of $0\in \C$ where $f, h^{-1} \circ g \circ h$ are defined. In a more
accurate way, let us require that $V \subset D$ is such that all the sets $f(V)$,
$h(V)$, $g\circ h(V)$ and $h^{-1} \circ g \circ h (V)$ are defined and contained in $D$. The next step is to consider the
pseudogroup generated {\it on  $V$}\, by $f, h^{-1} \circ g \circ h$. Given a word $W(f,h^{-1} \circ g \circ h)$
not reduced to either $f^k$ or to $h^{-1} \circ g^l \circ h$ ($k,l \in \Z$), let ${\rm Dom}_{W} (V)$ denote
its domain of definition when the word is regarded as an element of the mentioned pseudogroup.
Now note that ${\rm Dom}_{W} (V)$ is clearly an open set so that all its connected components are open as well. Let $V_1$
be one of these connected components. Since $\{p_n\}$ is dense in $\C$, there exists $n_1$ such that $p_{n_1} \in V_1$.
Since $V \subseteq D$, $p_{n_1}$ belongs to the domain of definition of $W(f,h^{-1} \circ g \circ h)$
viewed, this time, as belonging to the pseudogroup generated {\it on $D$}\, by $f, h^{-1} \circ g \circ h$.
Because $h$ belongs to $\mathcal{U}$ and, in particular, to $\mathcal{U}_{n_1,W}$, it follows that
$W(f,h^{-1} \circ g \circ h)$ does not coincide with the identity on a neighborhood of $p_{n_1}$. Therefore
$W(f,h^{-1} \circ g \circ h)$ does not coincide with the identity on $V_1$. Since $V_1$ is an arbitrary connected
component of ${\rm Dom}_{W} (V)$, we have proved the following:

\begin{teo}
\label{lastversion2.2}
There exists a $G_{\delta}$-dense set $\mathcal{U}$ of $\diff$ and a neighborhood $V \subset \C$ of $0 \in \C$
such that, whenever $h \in \mathcal{U}$, the following holds: every word $W(a,b)$ different from $a^k$ or $b^l$ for which the element $W(f,h^{-1} \circ g \circ h)$
of the pseudogroup generated by $f, h^{-1} \circ g \circ h$ on $V$ coincides with the identity on some connected
component of its domain of definition must be the identity in the free group generated by $a,b$ (and their inverses).
\end{teo}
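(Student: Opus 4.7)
The derivation of Theorem~\ref{lastversion2.2} from Proposition~\ref{lastversion1.1} has already been carried out in the paragraphs preceding the theorem statement, using the countability of reduced words, the density of $\{p_n\}$ in $\C$, and the Baire property of the analytic topology. Consequently the entire substance of the statement lies in Proposition~\ref{lastversion1.1}, which is what I would prove. Openness of $\mathcal{U}_{n,W}$ is straightforward: condition~(1) is the statement that some iterate of $p_n$ along the word escapes $\overline{D}$, which is stable under small $C^{\omega}$-perturbation of $h$ since $p_n \notin \partial D$ and the iterates depend continuously on $h$; condition~(2) is open for the same reason together with the fact that $W(f, h^{-1} \circ g \circ h)(p_n) \neq p_n$ is an open condition once $p_n$ sits interior to the domain of definition. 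The real work is density.

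For density, I would fix $h_0 \in \diff$ not in $\mathcal{U}_{n,W}$ and an analytic-topology neighborhood $\calU$ of $h_0$. Setting $g_0 = h_0^{-1} \circ g \circ h_0$, the point $p_n$ lies in the domain of $W(f,g_0)$ on $\overline{D}$ and $W(f,g_0) = {\rm id}$ on a neighborhood of $p_n$ (the residual failure case, where an intermediate iterate lands on $\partial D$ but nowhere outside, is handled by the same construction). The natural move is to seek a perturbation of the form $h = h_0 \circ H$ with $H \in \diff$ close to the identity at $0 \in \C$, since then $h^{-1} \circ g \circ h = H^{-1} \circ g_0 \circ H$ and the problem reduces to a variant of the one proved in Theorem~\ref{existence}, only executed \emph{at $p_n$ rather than near~$0$}. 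I would let $z_0 = p_n, z_1, \ldots, z_k$ be the itinerary of $p_n$ under $W(f,g_0)$, select the index $i_0$ for which $|z_{i_0}|$ is strictly maximal, and by cyclically rotating the word and, if necessary, swapping the roles of $f$ and $g$ as in Section~3 arrange that $i_0 = k-1$ and that $a_k$ is a power of $g_0$. After a rotation-rescaling centered at $0$ one reduces to the hypothesis of Proposition~\ref{sequencefunctions}, producing $H_m \to {\rm id}$ in the analytic topology with $H_m(z_{k-1})$ at a slightly larger modulus than $z_{k-1}$.

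The main obstacle is exactly the one flagged at the very end of Section~3: the Section~3 proof guaranteed connectedness of the sets $U_{i,m}$ by shrinking the neighborhood of $0 \in \C$, a luxury that is no longer available because $p_n$ sits at a fixed positive distance from the origin and the domain of $H_m^{-1} \circ g_0^{\pm j} \circ H_m$ on $\overline{D}$ may split into several components. My way around this is to abandon all global requirements about $W(f, h^{-1} \circ g \circ h)$ being defined on a connected neighborhood of $0$ and to argue purely about the itinerary of the \emph{single} point $p_n$ under the perturbed word. For such an argument it suffices that $H_m$ and $H_m^{-1}$ be defined and close to the identity on small neighborhoods of each $z_i$ with $i \neq k-1$, while $H_m$ near $z_{k-1}$ sends it close to the prescribed target; any disconnectedness of the domain of $H_m^{-1} \circ g_0 \circ H_m$ is then simply irrelevant since we only follow one orbit. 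I would compute the new itinerary $w_0, \ldots, w_k$ of $p_n$ under $W(f, H_m^{-1} \circ g_0 \circ H_m)$ (making a small adjustment to $w_0$ exactly as in the closing paragraphs of the proof of Theorem~\ref{existence}, to absorb the fact that powers of $g_0$ may occur before index $k-1$): the intermediate $w_i$ converge to $z_i$ as $m \to \infty$, whereas $w_k$ is forced close to $a_k(\delta)$, a point at a definite distance from $z_k = p_n$. Hence for $m$ large, either $w_k \neq p_n$, giving condition~(2), or some intermediate $w_j$ escapes $\overline{D}$, giving condition~(1); in either case $h_0 \circ H_m$ lies in $\calU \cap \mathcal{U}_{n,W}$, and density is established.
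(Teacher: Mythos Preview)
Your reduction of Theorem~\ref{lastversion2.2} to Proposition~\ref{lastversion1.1} and your treatment of openness are both fine. The density argument, however, contains a genuine gap at the very point where the paper invests most of its effort.

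You assert that $H_m \to {\rm id}$ in the analytic topology and conclude that $h_0 \circ H_m$ lies in the prescribed neighborhood $\calU$ of $h_0$. Neither claim holds. By construction (Proposition~\ref{sequencefunctions}) each $H_m$ is univalent only on a disc of radius barely exceeding $|z_{k-1}|$, and $H_m(z_{k-1})$ is pinned at the fixed point $\delta$ with $|\delta - z_{k-1}|$ bounded away from zero; hence $\Vert H_m - {\rm id} \Vert_r \geq |\delta - z_{k-1}| > 0$ for every $r \geq |z_{k-1}|$, and $H_m$ does not even extend holomorphically to larger discs. Thus $H_m \not\to {\rm id}$ in the analytic topology. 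Consequently $h_0 \circ H_m - h_0$ need not admit a holomorphic extension to $B(r)$ when $r$ exceeds the domain of $H_m$, so $\Vert h_0 \circ H_m - h_0 \Vert_r = +\infty$ for such $r$; since the given $\calU$ may well be of the form $(h_0 + \calU_r^{\varepsilon})$ with $r$ large, you cannot place $h_0 \circ H_m$ inside $\calU$. (Compare Remark~\ref{topologicalgroup}: composition is \emph{not} continuous for this topology.) Your decision to ``argue purely about the itinerary of a single point'' is therefore not enough: it produces a perturbation breaking the relation at $p_n$, but not one close to $h_0$ in the required sense.

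This is precisely the obstacle that Steps~2 and~3 of the paper's proof of Proposition~\ref{lastversion1.1} are designed to overcome. The paper first passes from $H_m$ to a polynomial truncation $R_m$; since the itinerary of $p_n$ need not lie in the convergence disc of $H_m$, an auxiliary point $q_n$ in the same connected component $U_1$ is introduced whose itinerary \emph{does} lie there. One then manufactures $S_m$ differing from $h$ by a \emph{polynomial} $Q_m$, and interpolates via $S_m^t = h + tQ_m$. Because $Q_m$ is a polynomial, $S_m^t \to h$ in the analytic topology as $t \to 0$, while analyticity in $t$ of $W(f,(S_m^t)^{-1} \circ g \circ S_m^t)(q_n) - q_n$ (nonzero at $t=1$) gives values of $t$ arbitrarily close to $0$ at which the relation is broken at $q_n$. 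Lemma~\ref{continuousmoving} then transports this back to $p_n$ via the stability of connected components. None of this machinery is optional: without the polynomial replacement you cannot get closeness on large discs, and without the connected-component argument you cannot pass from the auxiliary point back to $p_n$.
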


Now we can go back to the proof of Proposition~\ref{lastversion1.1}. Hence $p_n$ and $W(f,g)$ are fixed.
Let $h \in \diff$ be given. We need to prove that every neighborhood of $h$ in the analytic topology contains some
element $\tilde{h}$ belonging to $\mathcal{U}_{n,W}$. Naturally we assume that $h \not\in \mathcal{U}_{n,W}$ for
otherwise there is nothing to be proved. Since $h \not\in \mathcal{U}_{n,W}$, it follows from the construction of
$\mathcal{U}_{n,W}$ that $p_n$ belongs to the domain of definition of
$W(f,h^{-1} \circ g \circ h)$ viewed as an element of the pseudogroup generated on the closed disc $\overline{D}$
by $f,h^{-1} \circ g \circ h$. Furthermore we have the alternative:
\begin{itemize}
  \item There is at least one point in the itinerary of $p_n$ by $W(f,h^{-1} \circ g \circ h)$ that lies in the
  boundary $\partial D$ of $D$.
  \item $p_n$ belongs to the domain of definition of $W(f,h^{-1} \circ g \circ h)$ viewed as an element of
  the pseudogroup generated on the open disc $D$ and, in addition, $W(f,h^{-1} \circ g \circ h)$ coincides with
  the identity on a neighborhood of $p_n$.
\end{itemize}

When the first alternative holds, it is clear that the local diffeomorphism $h$ can be approximated in the analytic topology
by a sequence $\{h_i\}$ of local diffeomorphisms such that the itinerary of $p_n$ by $W(f,h_i^{-1} \circ g \circ h_i)$
is not contained in $\overline{D}$. In other words, $p_n$ does not lie in the domain of definition of
$W(f,h_i^{-1} \circ g \circ h_i)$ considered as an element of the pseudogroup generated by $f,h_i^{-1} \circ g \circ h_i$
on $\overline{D}$. Therefore $h_i \in \mathcal{U}_{n,W}$ for every $i$ and hence $h$ is a limit of elements in
$\mathcal{U}_{n,W}$.

Summarizing what precedes, to prove the denseness of $\mathcal{U}_{n,W}$ it suffices to check that a local diffeomorphism
$h$ as in the second possibility above can also be approximated by elements of $\mathcal{U}_{n,W}$. Thus from now on let
us suppose that $h$ satisfies the corresponding conditions. Namely the itinerary of $p_n$ by $W(f,h^{-1} \circ g \circ h)$
is contained in $D$ and $W(f,h^{-1} \circ g \circ h)$ coincides with the identity on a neighborhood of $p_n$.

Let $U \subset D$ be a domain with $C^1$-boundary (or real analytic boundary) $\partial U$ which still contains the itinerary of $p_n$ by
$W(f,h^{-1} \circ g \circ h)$. By resorting to standard transverse intersection constructions, the domain $U$
can be selected so that $\partial U$ intersects transversally the boundary of each connected component $U_j$, $j=1, 2, \ldots$,
of the domain of definition ${\rm Dom}_{W} (U)$ of $W(f,h^{-1} \circ g \circ h)$ now regarded as an element of the
pseudogroup generated on $U$ by $f,h^{-1} \circ g \circ h$. To check this claim note that the boundaries $\partial
U_j$ of the components $U_j$ are essentially determined by the preimage of $\partial U$ under sub-words of
$W(f,h^{-1} \circ g \circ h)$. In fact, if $z \in \partial U_j$ then there is a point $y$ in the itinerary of $z$
by $W(f,h^{-1} \circ g \circ h)$ lying in the boundary of $U$ or, equivalently, there is a sub-word
$\textsc{w} (f,h^{-1} \circ g \circ h)$ of $W(f,h^{-1} \circ g \circ h)$ (as in Section~3) such that
$\textsc{w} (f,h^{-1} \circ g \circ h) (z) =y \in \partial U$. Next since $U \subset D$, it follows that
$\textsc{w}(f,h^{-1} \circ g \circ h)$ is defined on a neighborhood of $z$.
Thus, about $z$, the neighborhood of $\partial U_j$ is identified to the preimage under $\textsc{w}(f,h^{-1} \circ g \circ h)$
of $\partial U$ on a neighborhood of $y$ (or rather to a finite union of these preimages).
It is now easy to carry over standard constructions to ensure that the above indicated transverse intersection takes place.
Next standard results on ``semi-analytic sets'' apply to ensure that the number of connected components $U_j$ is, indeed, finite.
The set of these connected components will then be denoted by $U_1, \ldots , U_r$.

At this point it is useful to state some simple ``stability'' properties of the previous construction with respect
to deformations of the local diffeomorphism $h$. These go as follows.

\begin{lema}
\label{continuousmoving}
If $\tilde{h}$ is another element of $\diff$ sufficiently close to $h$ in the analytic topology
then (the maps $f$ and $\tilde{h}^{-1} \circ g \circ \tilde{h}$ are defined on some neighborhood of $U$ and)
the following holds:
\begin{enumerate}
\item The domain of definition of $W(f,\tilde{h}^{-1} \circ g \circ \tilde{h})$ as element of the pseudogroup generated on $U$ by
$f, \, \tilde{h}^{-1} \circ g \circ \tilde{h}$ contains exactly $r$ connected components denoted by
$\tilde{U}_1, \ldots , \tilde{U}_r$. Besides the boundaries of these components still transversally intersects $\partial U$.

\item Each of the connected components $\tilde{U}_j$ converges (in Hausdorff topology) towards $U_j$ as $\tilde{h}$
converges to $h$.
\end{enumerate}
\end{lema}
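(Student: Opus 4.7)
The plan is to exploit the fact that closeness of $\tilde{h}$ to $h$ in the analytic topology yields $C^\infty$-uniform convergence on compact subsets for every holomorphic composition formed from $f,g$ and $\tilde{h}^{\pm 1}$, and then to invoke the well-known stability of transverse intersections in a surface.

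First I would verify the parenthetical statement in the lemma. If $\Vert \tilde{h}-h\Vert_r < \varepsilon$ for some $r$ with $B(r) \supset \overline{U}$ and $\varepsilon$ small, then $\tilde{h}(\overline{U})$ lies in an arbitrarily small Hausdorff neighborhood of the compact set $h(\overline{U})$, which by construction is contained in the domain of $g$; hence $g \circ \tilde{h}$ is defined on an open neighborhood of $\overline{U}$ independent of $\tilde{h}$. The same argument applied to the outer composition by $\tilde{h}^{-1}$ (whose domain converges to that of $h^{-1}$ on compacts about $0$) shows that $\tilde{h}^{-1}\circ g \circ \tilde{h}$ is defined on a fixed neighborhood of $\overline{U}$. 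An induction on word length then shows that every sub-word $\textsc{w}(f,\tilde{h}^{-1}\circ g \circ \tilde{h})$ of $W(f,\tilde{h}^{-1}\circ g \circ \tilde{h})$ is holomorphic on a fixed neighborhood of $\overline{U}$ and converges uniformly to $\textsc{w}(f,h^{-1}\circ g \circ h)$; Cauchy's formula upgrades this to convergence of all derivatives on compact subsets.

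Next I would transfer this $C^\infty$-convergence to the boundaries of the components. As described in the main text, $\partial U_j$ is a finite union of arcs supported on $\partial U$ and on the preimages $\textsc{w}(f,h^{-1}\circ g \circ h)^{-1}(\partial U)$, for $\textsc{w}$ ranging through the proper prefixes of $W$. The genericity assumption on $\partial U$ ensures that all such preimages meet $\partial U$ transversally, and that any two of them meet each other transversally inside $\overline{U}$. From the $C^1$-convergence obtained above, the perturbed preimages converge in $C^1$-norm to their unperturbed counterparts on $\overline{U}$, so by the openness of transversality every such intersection remains transverse for $\tilde{h}$ in a suitable analytic-topology neighborhood of $h$.

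The conclusion is then a standard structural stability statement. Finitely many $C^1$-arcs in the compact surface $\overline{U}$, meeting each other and meeting $\partial U$ only in transverse double points, form a configuration whose combinatorial type is preserved under sufficiently small $C^1$-perturbations. Therefore, for $\tilde{h}$ close enough to $h$, the perturbed configuration is the image of the original one under a homeomorphism of $\overline{U}$ that is $C^0$-close to the identity. This simultaneously yields the preservation of the number $r$ of connected components and the Hausdorff convergence $\tilde{U}_j \to U_j$. The main obstacle is precisely this last step: ruling out the birth or disappearance of components under perturbation. Transversality is the key mechanism, since at every boundary corner of $U_j$ the local picture is that of two $C^1$-curves crossing transversally, a configuration that cannot bifurcate under small $C^1$-perturbation.
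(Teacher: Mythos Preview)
Your proposal is correct and is precisely the kind of ``standard continuity argument'' the paper invokes; the paper's own proof consists of the single sentence ``The proof amounts to standard continuity arguments. Details are left to the reader.'' You have supplied those details---uniform $C^1$-convergence of the sub-words on $\overline{U}$, the description of $\partial U_j$ as a finite union of preimages of $\partial U$, and the structural stability of a transverse configuration of $C^1$-arcs---exactly along the lines the surrounding text suggests.
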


\begin{proof}
The proof amounts to standard continuity arguments. Details are left to the reader.
\end{proof}

Recalling that the itinerary of $p_n$ under $W(f,h^{-1} \circ g \circ h)$ is contained in $U$, we
can suppose, modulo renumbering the components $U_j$, that $p_n \in U_1$.

Note that, by assumption, the Taylor series of $W (f, h^{-1} \circ g \circ h)$ centered at $p_n$ is reduced to the identity.
On the other hand, the itinerary of $p_n$ under $W (f, h^{-1} \circ g \circ h)$ is entirely contained in $U \subset D$ by construction.
In particular $U$ is contained in a disc where the Taylor series of both $f, h^{-1} \circ g \circ h$ converge.
Note however that the coefficients of the Taylor series of $W (f, h^{-1} \circ g \circ h)$ at $p_n$ are obtained as functions
in {\it all the coefficients}\, of the Taylor series of $f,h^{-1} \circ g \circ h$ at $0 \in \C$.
For example $(W (f,h^{-1} \circ g \circ h))' (p_n)$ is not an algebraic function of finitely many coefficients of
$f,h^{-1} \circ g \circ h$ at $0 \in \C$ as in the Section~3. In fact, the computation of the coefficient in question involves all
(or infinitely many) Taylor coefficients of $f,h^{-1} \circ g \circ h$ at $0 \in \C$. To remedy
for this new difficulty, we shall modify the structure of the proof given in the previous section.

\begin{proof}[Proof of Proposition~\ref{lastversion1.1}]
As mentioned we need to find, arbitrarily near $h$, a local diffeomorphism $\tilde{h}$ such that
$W(f, \tilde{h}^{-1} \circ g \circ \tilde{h})$ will no longer coincide with the identity on a neighborhood
of $p_n$. Actually note that $W(f, \tilde{h}^{-1} \circ g \circ \tilde{h})$ is clearly defined about $p_n$ modulo choosing
$\tilde{h}$ close enough to $h$. The desired element
$\tilde{h}$ will be constructed in three different steps.

\noindent {\bf Step 1}. Construction of a first perturbation.

\noindent First we are going to construct a local diffeomorphism $\overline{h}$ as in the proof of Theorem~\ref{existence} (see also
Proposition~\ref{sequencefunctions}). By construction of $\overline{h}$ (noted $h_W$ in the proof of Theorem~\ref{existence}), it is clear that the
itinerary of $p_n$ under $W (f, \overline{h}^{-1} \circ g \circ \overline{h})$ is well-defined.
In other words, $p_n$ belongs to the domain of definition
of $W (f, \overline{h}^{-1} \circ g \circ \overline{h})$ considered as element of the pseudogroup generated by
$f,\overline{h}^{-1} \circ g \circ \overline{h}$ on a suitable domain contained in $V \subset D \subset \C$. Note however that this
time we have no information concerning the ``size'' of the connected component of the domain of definition of
$W (f, \overline{h}^{-1} \circ g \circ \overline{h})$ that contains $p_n$. Similarly we cannot resort to arguments
involving convexity as done in Section~3.

In any case, we keep the information that the itinerary of $p_n$ is well-defined and, besides, we have
$W (f, \overline{h}^{-1} \circ g \circ \overline{h}) (p_n) \neq p_n$. Next let us resume
the setting and the notations employed in the proof of Theorem~\ref{existence}. Thus ``$p_n$'' is identified with the point
``$z$'' of the mentioned proof. The itinerary of $p_n$ through $W(f, h^{-1} \circ g \circ h)$
is going to be denoted by $\{p_n^i\}_{i=1, \ldots, l}$ where
$p_n =p_n^1$. Without loss of generality we can suppose that $p_n^{l-1}$ is
the unique point of this itinerary having a maximal absolute value.
Consider now a family of maps $H_m$ similar to the corresponding family considered in Theorem~\ref{existence}.
In particular we have:
\begin{enumerate}
\item Each $H_m$ is defined on an open disc of radius slightly larger than $\Vert p_n^{l-1} \Vert$ and the resulting sequence
converges uniformly to the identity
on compact parts of $B (\Vert p_n^{l-1} \Vert)$ (the open disc of radius $\Vert p_n^{l-1} \Vert$ about the origin).

\item $\Vert H_m (p_n^{l-1}) - p_n^{l-1} \Vert = \tau > 0$ where $\tau$ is such that $H_m (p_n^{l-1})$ has properties similar
to the properties of ``$\delta >1$'' in the mentioned proof.

\item The point $p_n^{l}$ of the itinerary of $p_n$ is given by $p_n^{l} = h^{-1} \circ g^j \circ h (p_n^{l-1})$.
Besides $h^{-1} \circ g^j \circ h (H_m (p_n^{l-1}))$
lies in a compact part of the disc $B (\Vert p_n^{l-1} \Vert)$. In particular, for $m$ very large, $H_m^{-1}$ is defined
and close to the identity around $h^{-1} \circ g^j \circ h (H_m (p_n^{l-1}))$.
\end{enumerate}

As already observed, for $m$ very large $W (f, H_m^{-1} \circ h^{-1} \circ g \circ h \circ H_m) (p_n) \neq p_n$.
Set $\overline{h} = H_m$ for very large $m$ to be fixed later on.

In view of Remark~\ref{tangenttoidentity}, the sequence $H_m$ can be constructed so that, in addition
to the preceding conditions, each $H_m$ is tangent to the identity at $0 \in \C$ to an arbitrary order fixed
from the beginning. This information will be used to provide a variant of Theorem~\ref{lastversion2.2} well adapted
for the discussion carried out in Section~5.

\noindent {\bf Step 2}. Perturbing the Taylor series of $H_m$.

\noindent To guarantee convergence in the analytic topology it is necessary to be able to perform a number
of operations with the Taylor series of $H_m$. For example, if $h$ were the identity, then it would be important
to replace $H_m$ by suitable polynomials somehow as already done in Section~2. Here a slightly more elaborated
construction is going to be needed. To begin with, we would like to replace $H_m$ by
some polynomial $R$ with $R(0) =0$. In other words, we want to find a polynomial
$R$ {\it arbitrarily close to $H_m$ on discs of radius less than $\Vert p_n^{l-1} \Vert$}\, for which
$W (f, R^{-1} \circ h^{-1} \circ g^j \circ h \circ R)$ is defined about $p_n$
and, in fact, verifies $W (f, R^{-1} \circ h^{-1} \circ g \circ h \circ R) (p_n) \neq p_n$. The main reason why the existence of $R$
cannot be ensured by merely truncating the Taylor
series of $H_m$ at a sufficiently high order lies in the fact that the point $H_m (p_n^{l-1})$ does not belong
to the convergence disc of the Taylor series of $H_m$ at the origin. Indeed this radius of convergence is ``essentially'' given
by $\Vert p_n^{l-1} \Vert$ so that it is already unclear whether or not $p_n^{l-1}$ belongs to it.

To overcome the above difficulty we proceed as follows. Denote by $U^{H_m}$ the connected component of the
domain of definition of $W (f, H_m^{-1} \circ h^{-1} \circ g \circ h \circ H_m)$, w.r.t. $U$, containing $p_n$. Note that
the construction of $H_m$ makes $p_n$ to be close to the boundary of $U^{H_m}$ when $m$ is large. Also
recall that $U_1$ stands for the component of the domain of $W(f,h^{-1} \circ g \circ h)$, w.r.t. the set $U$ that contains $p_n$.
We are going to show the existence of another point $q_n \in U^{H_m}$ whose corresponding iterations by the elements
$f,h^{-1} \circ g \circ h,\, H_m$ and $H_m^{-1}$ as they
appear listed in $W (f, H_m^{-1} \circ h^{-1} \circ g \circ h \circ H_m)$ are all contained in a compact part of
$B (\Vert p_n^{l-1} \Vert)$.
Let then $l$ denote the segment of straight line delimited by $0 \in \C$ and by
$p_n^{l-1}$. Let $l'$ be the line issued from
$p_n$ corresponding to the pre-image of $l$ by the derivative of the diffeomorphism $\tilde{F}_m$ given as the sub-word of
$W(f, H_m^{-1} \circ h^{-1} \circ g \circ h \circ H_m)$ taking
$p_n$ to $p_n^{l-1}$. Note that all the maps $f,h^{-1} \circ g \circ h$ and $H_m, H_m^{-1}$ involved in the constitution
of $\tilde{F}_m$ have bounded $C^2$-norms on a neighborhood
of the points where they appear in the composition in question. In other words, the following holds: as we move from $p_n$ to $p_n^t$
over $l'$, the image $\tilde{F}_m (p_n^t)$ moves inward $B (\Vert p_n^{l-1} \Vert)$.
Besides this movement is uniform in the following sense: if $q_n$ is a given point ``close to $p_n$'' over $l'$, then
\begin{equation}
\Vert \tilde{F}_m (q_n) \Vert < \Vert p_n^{l-1} \Vert - \epsilon \label{isntoveryet}
\end{equation}
for some small $\epsilon >0$ where the constants in question do not depend on $m$. In fact, as $m$ increases, $H_m$ converges
to the identity around the relevant points appearing in the definition of $\tilde{F}_m (q_n)$.
In particular it follows that $q_n$ belongs to $U^{H_m}$. Finally since $H_m$ converges uniformly to the identity on
compact parts of $B (\Vert p_n^{l-1} \Vert)$, Estimate~(\ref{isntoveryet}) allows us to conclude that
$H_m (\tilde{F}_m (q_n))$ lies in $B (\Vert p_n^{l-1} \Vert)$ for $m$ large. The same then applies to $g^j (H_m (\tilde{F}_m (q_n)))$.

Summarizing what precedes, the following lemma was proved:

\begin{lema}
There is a point $q_n$ in $U_1 \cap U^{H_m}$ whose itinerary by $W (f, H_m^{-1} \circ h^{-1} \circ g \circ h \circ H_m)$
is entirely contained in a compact part of the convergence discs
of the Taylor series at $0 \in \C$ of $f,h^{-1} \circ g \circ h$ and $H_m, H_m^{-1}$.
In addition $W (f, H_m^{-1} \circ h^{-1} \circ g \circ h \circ H_m)(q_n) \neq q_n$.\qed
\end{lema}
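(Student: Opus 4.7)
The plan is to produce $q_n$ as a small perturbation of $p_n$ along the line $l'$ already introduced in the discussion, pushing the image slightly inside the disc $B(\Vert p_n^{l-1} \Vert)$ rather than exactly on its boundary. Concretely, I would set $q_n = p_n + t \xi$, where $\xi$ is the unit direction of $l'$ and $t > 0$ is a small parameter to be fixed later.

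The first step is to control the sub-word $\tilde{F}_m$ that takes $p_n$ to $p_n^{l-1}$. Each factor appearing in $\tilde{F}_m$ is either $f^{\pm j}$, $h^{-1}\circ g^{\pm j} \circ h$, or one of the Riemann maps $H_m, H_m^{-1}$; the crucial point is that on a fixed neighbourhood of each relevant intermediate point, the $C^2$-norms of these maps are uniformly bounded in $m$ (for the $H_m$'s this follows from Caratheodory's theorem together with Cauchy estimates, since the $H_m$ converge uniformly to the identity on compact subsets of $B(\Vert p_n^{l-1} \Vert)$). By the very definition of $l'$, the image $\tilf_m(l')$ is tangent to the segment from $0$ to $p_n^{l-1}$ at $p_n^{l-1}$; hence, for $t$ small and $m$ large, $\tilde{F}_m(q_n)$ lies on the segment side, at distance at least $c\, t$ from $\partial B(\Vert p_n^{l-1} \Vert)$ for some $c > 0$ independent of $m$. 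This is exactly the estimate~\eqref{isntoveryet} and yields $\Vert \tilde{F}_m(q_n) \Vert < \Vert p_n^{l-1}\Vert - \vare$ for a uniform $\vare > 0$.

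Having placed $\tilde{F}_m(q_n)$ in a compact subset of $B(\Vert p_n^{l-1}\Vert)$, I would then propagate this information through the remaining letters of the word $W(f, H_m^{-1}\circ h^{-1}\circ g\circ h\circ H_m)$. Since $H_m$ converges uniformly to the identity on compacta of $B(\Vert p_n^{l-1}\Vert)$, the point $H_m(\tilde{F}_m(q_n))$ lies in such a compact set for $m$ large; the same then holds after applying the appropriate power of $g$ (defined and univalent on a neighbourhood of $\overline{D}$) and of $H_m^{-1}$. Iterating this argument over the remaining letters, which are few in number and all have bounded $C^2$-norms on the regions where they act, places the entire itinerary of $q_n$ inside a compact subset of the common convergence discs of the Taylor series of $f$, $h^{-1}\circ g \circ h$ and $H_m, H_m^{-1}$; in particular $q_n \in U_1 \cap U^{H_m}$.

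Finally, I would check the nondegeneracy $W(f, H_m^{-1}\circ h^{-1}\circ g\circ h\circ H_m)(q_n) \neq q_n$. By construction of $H_m$ (property (2) in Step~1), $W$ displaces $p_n^{l-1}$ by a distance $\tau > 0$ independent of $m$, so it also displaces nearby points by roughly $\tau$ up to an error bounded uniformly by the $C^1$-norm of the word times $\Vert q_n - p_n \Vert$; choosing $t$ small compared to $\tau$ and $m$ large, the displacement remains bounded below and the claim follows. The main obstacle I foresee is ensuring that $\vare$, the compact sets, and the $C^2$-bounds are all uniform in $m$; this uniformity is what makes the argument work and hinges on the Caratheodory-type convergence of the $H_m$ established in Proposition~\ref{sequencefunctions}.
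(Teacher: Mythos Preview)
Your proposal is correct and follows essentially the same route as the paper: perturb $p_n$ along the line $l'$, use the uniform $C^2$-bounds on the factors of $\tilde{F}_m$ (with uniformity in $m$ coming from the Carath\'eodory convergence of the $H_m$) to obtain Estimate~(\ref{isntoveryet}), and then propagate through the remaining factor $H_m^{-1}\circ h^{-1}\circ g^j\circ h\circ H_m$ using that $H_m\to{\rm id}$ on compacta. Your treatment of the nondegeneracy $W(\cdots)(q_n)\neq q_n$ via the fixed displacement $\tau$ is slightly more explicit than the paper's, which leaves this to the reader as an immediate consequence of continuity and $W(\cdots)(p_n)\neq p_n$.
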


Now modulo truncating the Taylor series of the elements $H_m$ at large orders $d(m)$, it follows the existence of
polynomials $R_m$ satisfying the conditions below.
\begin{enumerate}
\item The itineraries of $q_n$ under $W (f, H_m^{-1} \circ h^{-1} \circ g \circ h \circ H_m)$ and under
$W (f, R_m^{-1} \circ h^{-1} \circ g \circ h \circ R_m)$ are well-defined. Furthermore
$W (f, R_m^{-1} \circ h^{-1} \circ g \circ h \circ R_m) (q_n) \neq q_n$.

\item Both $p_n, q_n$ belong to the same connected component $U_1$ (resp. $U^{H_m}$) of the domain of definition of
$W(f,h^{-1} \circ g \circ h)$ (resp. $W (f, H_m^{-1} \circ h^{-1} \circ g \circ h \circ H_m)$).
\end{enumerate}

The contents of item~2
may not hold for $W (f, R_m^{-1} \circ h^{-1} \circ g \circ h \circ R_m)$, i.e. $p_n$ and $q_n$ may not belong to the same connected
component of the domain of definition of $W (f, R_m^{-1} \circ h^{-1} \circ g \circ h \circ R_m)$.
In fact, whereas
$R_m$ is close to $H_m$ on a suitable disc (not containing the itinerary of $p_n$), we have no control of the degree $d(m)$ of $R_m$. Hence
we cannot conclude that all its coefficients are simultaneously ``small''. In other words $R_m$ may become ``far'' from the
$H_m$ on a disc of radius slightly larger than the above mentioned disc. Since $H_m$ converges to the identity on both discs,
the preceding can be re-stated by saying that $R_m$ may be close to the identity on the smaller disc but far from the identity
on the larger disc. This explains why,
in principle, it is unclear even whether $W (f, R_m^{-1} \circ h^{-1} \circ g \circ h \circ R_m)$ is defined at $p_n$.

\noindent {\bf Step 3}. Adjusting perturbations.

\noindent The preceding construction has still one additional property. Namely the existence of a uniform disc ${\bf B}$
containing the itinerary of $q_n$ under $W (f, R_m^{-1} \circ h^{-1} \circ g \circ h \circ R_m)$. Here ${\bf B}$ was called
uniform in the sense that it does not depend on either $m$ or $d(m)$.
Besides the radius of ${\bf B}$ is smaller than the convergence radii of the Taylor series of
$f, \, h^{-1} \circ g \circ h,  H_m$. Finally
$H_m$ (resp $R_m$) converges uniformly to the identity on ${\bf B}$ when
$m \rightarrow \infty$. In particular ${\bf B}$ also
contains the itineraries of $q_n$ under $W (f, H_m^{-1} \circ h^{-1} \circ g \circ h \circ H_m)$
and under $W(f,h^{-1} \circ g \circ h)$.

Next let $h \circ R_m$ be truncated at large orders $N(m)$ so as to have the following condition
verified: the series formed by the components of degree higher than $N(m)$ of the Taylor series
of both $h \circ R_m$ and $h$ converge uniformly to {\it zero}\, on ${\bf B}$. Next we add to the truncation
of $h \circ R_m$ the terms having degree greater than $N (m)$ of the Taylor series of $h$. The result of
these operations is a new sequence of local diffeomorphisms $S_m$ with the properties indicated below:
\begin{description}

\item[(a)] $S_m$ (is defined and) converges uniformly to $h$ on ${\bf B}$ when $m \rightarrow \infty$. In particular the itinerary
of $q_n$ under $W (f, S_m^{-1} \circ g \circ S_m)$ is well-defined and contained in ${\bf B}$. Furthermore
$W (f, S_m^{-1} \circ g \circ S_m) (q_n) \neq q_n$.

\item[(b)] The Taylor series of $S_m$ and $h$ centered at $0 \in \C$ differ only by finitely many
coefficients.

\end{description}
In other words, the difference $S_m -h$ is represented on ${\bf B}$ by a polynomial denoted
by $Q_m$. Moreover $Q_m$ converges uniformly to {\it zero}\, since $S_m \rightarrow h$ uniformly
on ${\bf B}$.

Now let us fix $\tau >0$ small enough to ensure that the following condition
holds: whenever $\xi$ is a univalent
map on ${\bf B}$ whose distance to $h$ is less than $\tau$, the itinerary of $q_n$ under
$W (f, \xi^{-1} \circ g \circ \xi)$ is well-defined and contained in ${\bf B}$.
Finally we fix $m$ so large that $S_m$, in addition to verify all the previously determined conditions,
is also $\tau$-close to $h$ on ${\bf B}$. The degree $N(m)$ of the corresponding polynomial
$Q_m$ is going to be denoted simply by $N$. As already pointed out above, though $Q_m$ is close to {\it zero}\,
on ${\bf B}$, we cannot conclude that its coefficients are ``small'' since we have little control on
its degree~$N$.

Since $S_m (0) = h(0) =0$, we have $Q_m= c_1z + c_2 z^2 + \cdots + c_N z^N$. The quantity
$W (f, S_m^{-1}  \circ g \circ S_m) (q_n) -q_n$ can then be
regarded as an analytic function on the coefficients $c_1 ,\ldots ,c_N$ of $Q_n$ through the identification
$S_m = h + Q_m$. In principle this function is defined on a neighborhood of $(c_1, \ldots ,c_N)$. Setting however
$S_m^t(z) = h+  tc_1 z + \cdots + tc_N z^N = h+tQ_m$ it follows that the function is question is well-defined at
$(tc_1, \ldots , tc_N)$ for every $t \in [0,1]$ since the distance from $S_m^t$ to $h$ is less than $\tau$ for every
$t \in [0,1]$. Indeed this distance is nothing but $t \sup \Vert Q_m \Vert \leq  \sup \Vert Q_m \Vert \leq \tau$.
Thus the analytic function in question is defined on some small neighborhood of the
segment of line given by $t \mapsto (tc_1, \ldots , tc_N)$, $t \in [0,1]$. The resulting function of variable
``$t$'' however is not identically zero since for $t=1$ we have
$W (f, S_m^{-1} \circ g \circ S_m) (q_n) \neq q_n$, cf. item~(a) above.
Therefore we can find $N$-tuples $(c_1^{t}, \ldots ,c_N^{t})$, $c_i^t = tc_i$, arbitrarily close to $(0, \ldots ,0)$ such that
$W (f, (S_m^{t})^{-1} \circ g  \circ S_m^{t}) (q_n) \neq q_n$. To complete the proof it suffices to check that
$S_m^t$ fulfils all the conditions of the statement provided that $t$ is close enough to $0$.

For this note first that $S_m^t$ clearly converges to $h$ in the analytic topology when
$t \rightarrow 0$. In particular $S_m^t$ converges uniformly to $h$ on the whole $U$. Thus for $t$ small enough,
$S_m^t$ satisfies the conditions of Lemma~\ref{continuousmoving}. Hence the domain of definition of
$W (f, (S_m^{t})^{-1} \circ g  \circ S_m^{t})$ as element of the pseudogroup generated on $U$ by
$f, (S_m^{t})^{-1} \circ g  \circ S_m^{t}$ contains exactly $r$ connected components
$\tilde{U}_1^t, \ldots , \tilde{U}_r^t$ in natural correspondence with the components $U_1, \ldots ,U_r$ of the domain of definition of
$W (f, h^{-1} \circ g  \circ h)$ as element of the analogous pseudogroup. Besides the domain $\tilde{U}_1^t$ converges suitably
to $U_1$ when $t \rightarrow 0$.
Since $p_n, q_n$ belong to the same connected component of the domain of $W (f, h^{-1} \circ g  \circ h)$, cf. item~2 above,
we conclude that $q_n$ belongs to $U_1$. Therefore, modulo choosing $t$ very small, it follows that both
$p_n, \, q_n$ belong also to $\tilde{U}_1^t$. Therefore
the restriction of $W (f, (S_m^{t})^{-1} \circ g  \circ S_m^{t})$ to $\tilde{U}_1^t$ does not coincide with the identity
since $W (f, (S_m^{t})^{-1} \circ g  \circ S_m^{t}) (q_n) \neq q_n$. Because this is an analytic map defined on $\tilde{U}_1^t$
it follows that $W (f, (S_m^{t})^{-1} \circ g  \circ S_m^{t})$ cannot coincide with the identity on a neighborhood of $p_n$.
This completes the proof of Proposition~\ref{lastversion1.1}
\end{proof}

The preceding proof also yields the following variant of Theorem~\ref{lastversion2.2} which is necessary to be able to turn generic
statements about local diffeomorphisms equipped with the analytic topology into generic statements about foliations
with respect to the Krull topology. For this let $N \in \N^{\ast}$ be fixed and consider the subgroup
$\diff_N \subset \diff$ consisting of those elements that are tangent to the identity at $0 \in \C$ to an order
at least $N$. Cauchy Formula shows that $\diff_N$ is closed with respect to the analytic topology. It is also clear
that $\diff_N$ endowed with the restriction of the analytic topology still is a Baire space. Now we have:
Consider elements $f,g$ in $\diff$ of finite order respectively equal to $k,l \geq 1$. Through the substitutions
$a^{\pm 1} \mapsto f^{\pm 1}$ and $b^{\pm 1} \mapsto g^{\pm 1}$, words $W (a,b)$ representing elements of the group
$\Z /k \Z \ast \Z /l\Z$ can be identified to the holomorphic maps $W(f,g)$.

\begin{teo}
\label{lastversion3.3}
Consider elements $f,g$ in $\diff$ or finite orders respectively equal to $k,l \geq 1$. Then
there exists a $G_{\delta}$-dense set $\mathcal{U}_N$ of $\diff_N$ and a neighborhood $V \subset \C$ of $0 \in \C$ such that,
whenever $h \in \mathcal{U}_N$, the following holds: every word $W(a,b)$ for which the element
$W(f,h^{-1} \circ g \circ h)$ of the pseudogroup generated by $f, h^{-1} \circ g \circ h$ on $V$ coincides with the identity os some connected
component of its domain of definition must represent the identity $\Z /k \Z \ast \Z /l\Z$.
\end {teo}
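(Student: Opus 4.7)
The plan is to imitate the proof of Theorem~\ref{lastversion2.2} while keeping every perturbation inside $\diff_N$. First I would observe that $\diff_N$ is closed in $\diff$ for the analytic topology, being the intersection of finitely many zero sets of continuous jet-evaluation functionals (cf.\ Remark~\ref{none}), and hence inherits the Baire property. It will then suffice to produce, for each $n \in \N$ and each reduced word $W(a,b)$ representing a non-trivial element of $\Z /k\Z \ast \Z /l\Z$, an open dense subset $\mathcal{U}_{n,W,N} \subset \diff_N$ whose elements $h$ satisfy the same dichotomy that defined $\mathcal{U}_{n,W}$ in Proposition~\ref{lastversion1.1}. Setting $\mathcal{U}_N = \bigcap_{n,W} \mathcal{U}_{n,W,N}$ will give the required $G_{\delta}$-dense set, and the ``cover-by-$\{p_n\}$'' argument following Proposition~\ref{lastversion1.1} will translate the conclusion into the stated form about connected components.

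Openness of $\mathcal{U}_{n,W,N} = \mathcal{U}_{n,W} \cap \diff_N$ is inherited from Proposition~\ref{lastversion1.1}. For the density, words of the form $a^m$ or $b^j$ that are non-trivial in $\Z /k\Z \ast \Z /l\Z$ are handled trivially since the corresponding $f^m, g^j$ are themselves not the identity in $\diff$; for the remaining words, Theorem~\ref{existence} rules out universal relations, and I would rerun the three-step construction in the proof of Proposition~\ref{lastversion1.1}, tracking the tangency-to-identity order at $0$. In Step~1, Remark~\ref{tangenttoidentity} allows the Riemann-map sequence $H_m$ that breaks the relation along the itinerary of $p_n$ to be chosen tangent to the identity to order $N$, without disturbing either its uniform convergence to the identity on compact parts of $B(\Vert p_n^{l-1} \Vert)$ or the fact that $H_m(p_n^{l-1})$ stays at uniform positive distance from $p_n^{l-1}$. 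In Step~2, truncating $H_m$ to a degree $d(m) > N$ still yields a polynomial $R_m$ belonging to $\diff_N$.

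In Step~3 one replaces $h \circ R_m$ by its truncation at an order $N(m) > N$ and then adds back the tail of $h$ of degree exceeding $N(m)$; since $\diff_N$ is a subgroup of $\diff$ and $N(m) > N$, the resulting map $S_m$ lies in $\diff_N$, and the difference $Q_m = S_m - h$ consists only of monomials of degree at least $N+1$. Consequently the straight-line family $S_m^t = h + tQ_m$, $t \in [0,1]$, stays entirely in $\diff_N$. The rest of the argument then goes through unchanged: the analytic function $t \mapsto W(f,(S_m^t)^{-1}\circ g \circ S_m^t)(q_n) - q_n$ is non-zero at $t=1$ and hence not identically zero, and Lemma~\ref{continuousmoving} guarantees that for sufficiently small $t > 0$ the points $p_n$ and $q_n$ still lie in the same connected component of the domain of $W(f,(S_m^t)^{-1}\circ g \circ S_m^t)$. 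One then concludes that $S_m^t \in \mathcal{U}_{n,W,N}$ for $t$ arbitrarily small, giving the desired approximation of $h$.

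The main technical obstacle is the tangency-order book-keeping in Step~3: the crucial property that $Q_m$ has no monomials of degree $\leq N$ rests on choosing every intermediate truncation order strictly larger than $N$, which is an innocuous constraint since $m$ and the corresponding truncation orders are taken large at the end anyway. All the other ingredients---the choice of the auxiliary point $q_n$, the uniform disc $\mathbf{B}$, the size of the constant $\tau$---depend only on data fixed at the beginning of the proof of Proposition~\ref{lastversion1.1} and are insensitive to the additional constraint $h \in \diff_N$.
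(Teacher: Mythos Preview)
Your proposal is correct and follows essentially the same route as the paper: both reduce to a relative version of Proposition~\ref{lastversion1.1} inside $\diff_N$, invoke Remark~\ref{tangenttoidentity} in Step~1 to make $H_m$ tangent to the identity to order~$N$, and then observe that the truncations in Steps~2 and~3 can be carried out so that $R_m$, $S_m$ and $Q_m$ remain in $\diff_N$. Your write-up is more explicit about the tangency-order book-keeping (in particular why $Q_m$ has no monomials of degree~$\leq N$, so that the segment $S_m^t$ stays in $\diff_N$), but this is exactly the content of the paper's sentence ``the rest of the proof carries over automatically.''
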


\begin{proof}
Consider a non-trivial reduced word in $W (f,g)$ and let it be further reduced through the identities
$f^k =g^l ={\rm id}$. The new word is still denoted by $W (f,g)$ and we can suppose that it is not reduced to
a integral power of either $f$ or $g$. We just need to prove a version of Proposition~\ref{lastversion1.1}
relative to $\diff_N$. We repeat the same structure of proof, in particular we have ``relative'' sets
$\mathcal{U}_{n,W}$ which are still open. It is then enough to check that these sets are also dense in
$\diff_N$. For this we resume the argument given in the steps~1, 2 and~3 of the proof of Proposition~\ref{lastversion1.1}.
As to Step~1, Remark~\ref{tangenttoidentity} allows us to construct the sequence $H_m$ so that each
$H_m$ is tangent to the identity at $0 \in \C$ to order $N$. The truncations of Taylor series performed
in Step~2, in particular the one used to define the polynomials $R_m$ can equally well be performed in $\diff_N$.
Finally, concerning Step~3, the additional truncations of Taylors series can also be carried out in $\diff_N$,
in particular the local diffeomorphism $S_m$ and the polynomial $Q_m$ may be supposed to belong to $\diff_N$.
The rest of the proof carries over automatically to the present context.
\end{proof}

We are finally able to prove the first two theorems stated in the Introduction.

\begin{proof}[Proof of Theorems A and B]
The first two parts of these statements are contained in Theorems~\ref{almostfinal1} and~\ref{almostfinal2}
and in Theorem~\ref{lastversion2.2}.
It only remains to prove the last part of these statements. Namely we need to
show the existence of infinitely many points $\{ \textsc{z}_i \}$ in the neighborhood $V$ of $0 \in \C$ given by
Theorem~\ref{lastversion2.2} satisfying the following conditions:
\begin{enumerate}
\item Each $\textsc{z}_i$ is fixed by an element $W_i (f,g)$ (whose domain of definition contains $\textsc{z}_i$) which does not coincide
with the identity on any neighborhood of $\textsc{z}_i$.

\item If $i\neq j$ then $\textsc{z}_i$ and $\textsc{z}_j$ possess disjoint orbits under the pseudogroup generated by $f,g$
on $V$.

\end{enumerate}

The construction of these points goes through ideas already appearing
in \cite{Frank1}, \cite{Morefrank} (the reference \cite{julio} might also be useful). We sketch the argument in the sequel.
A generic choice of $h$ allows us to suppose that the pseudogroup generated by $f,g$ verifies the conclusion of
Theorem~\ref{lastversion2.2}. In particular its {\it germ}\, at $0 \in \C$ is a non-solvable group.
Secondly, by using again our perturbation techniques, the following claim can be proved (details are left to the reader):

\noindent {\it Claim 1}. For a generic choice of $h$ (and substitution of $g$ by $h^{-1} \circ g \circ h$) we can suppose
that every point $z \in V \setminus \{ 0\}$ has centralizer either trivial or infinite cyclic. In other words,
if $W_1 (f,g), \, W_2 (f,g)$ are such that $W_1 (f,g) (z) = W_2 (f,g) (z) =z$,  then there is another element $W_{1,2}$, defined
about $z$ and verifying such that $W_1 (f,g) = (W_{1,2} (f,g))^{k_1}$ and $W_2 (f,g) = (W_{1,2} (f,g))^{k_2}$
for suitable integers $k_1, k_2$.\qed

Similarly we can also suppose that the following ``non-commutative condition'' is satisfied:

\noindent {\it Claim 2}. Suppose that $W_1 (f,g), \, W_2 (f,g)$ are elements of the pseudogroup generated
on $V$ by $f,g$ that are defined about a point
$z \in V$. Suppose also that $(W_1 (f,g))^{-1} \circ W_2 (f,g) \circ W_1 (f,g)$ is defined about $z$ and, in a neighborhood
of $z$, satisfies $(W_1 (f,g))^{-1} \circ W_2 (f,g) \circ W_1 (f,g) = W_2 (f,g)$. Then we must have
$W_1 (f,g) = (W_{1,2} (f,g))^{k_1}$ and $W_2 (f,g) = (W_{1,2} (f,g))^{k_2}$ for some element $W_{1,2}$ and certain
integers $k_1, k_2$.\qed

After \cite{Frank1}, we are ensured of the existence of $\textsc{z}_1$ as above which, in fact, is a hyperbolic fixed point for some element
$W_1 (f,g)$. We shall construct points $\textsc{z}_1', \textsc{z}_2'$ ($\textsc{z}_1' \neq \textsc{z}_2'$) with similar properties and
such that $\textsc{z}_1', \textsc{z}_2'$ have disjoint orbits. The proof will then easily follow by an inductive argument. For this note that,
according to the mentioned works and thanks to
the contractive character of $W_1 (f,g) $ at $\textsc{z}_1$, there exists a neighborhood $U_1$ of $\textsc{z}_1$
such that any holomorphic map $F$ from $U_1$ to $F (U_1) \subseteq V$ can be approximated on compact sets of $U_1$ by actual elements
$\textsc{w}^1 (f,g), \textsc{w}^2 (f,g) ,\ldots$, defined on $U_1$ and belonging to the pseudogroup generated by $f,g$ on $V$.
In particular $F$ can be chosen so as to have two (hyperbolic) fixed points, namely $\textsc{z}_1$ and another point
$\textsc{z}_2$ both lying in $U_1$. Thus for $K$ sufficiently
large, the element $\textsc{w}^K (f,g)$ of $\Gamma$ has at least two distinct fixed points in $U_1$.
These points are denoted by $\textsc{z}_1', \textsc{z}_2'$. It remains only to show
that the orbits of $\textsc{z}_1', \textsc{z}_2'$ under the pseudogroup in question are disjoint.
Suppose for a contradiction this is not the case. Then there exists an element
$W (f,g)$ such that
\[
(W(f,g))^{-1} \circ \textsc{w}^K (f,g) \circ W(f,g) (\textsc{z}_1') =\textsc{z}_1' \, .
\]
In particular
$(W (f,g))^{-1} \circ \textsc{w}^K (f,g) \circ W(f,g)$ is defined on a neighborhood of $\textsc{z}_1'$.
In view of the conditions about stabilizers of points in $V$, it follows that
$(W (f,g))^{-1} \circ \textsc{w}^K (f,g) \circ W(f,g)$ must coincide with a power of $\textsc{w}^K (f,g)$.
Therefore $W (f,g)$ is itself a power of $\textsc{w}^K (f,g)$ (on a neighborhood of $\textsc{z}_1'$) by
virtue of Claim~2. This is however impossible since
$\textsc{w}^K (f,g) (\textsc{z}_1') =\textsc{z}_1'$ and $W (f,g) (\textsc{z}_1'') =\textsc{z}_2' \neq \textsc{z}_1'$.
The proofs of Theorems~A and~B are now complete.
\end{proof}

\section{An application to nilpotent foliations}

As indicated in the Introduction, the problem of perturbing the generators of a subgroup of $\diff$ inside their conjugacy classes
arises naturally in the study of germs of singular foliations at the origin of $\C^2$. Probably the most typical example
where this situation can be found corresponds to the class of nilpotent foliations of type
$A^{2k+1}$. More precisely, these are local foliations $\fol_{\om}$ defined by
a (germ of) $1$-form $\om$ having nilpotent linear part, i.e. $\om = y dy + \cdots$, and a unique separatrix $S$ that happens
to be a curve analytically equivalent to $\{ y^2 - x^{2k+1} = 0\}$. In other words, there are local coordinates where $S$ is given
by the equation $\{y^2 - x^{2k+1} = 0\}$. For this type of foliation the
desingularization of the separatrix ``coincides" with the reduction of the foliation itself. In other words,
the map associated to the desingularization of the separatrix $\Pi_S: M \rightarrow \C^2$ reduces also
the foliation $\fol_{\om}$. The exceptional divisor consists of a string of $k+2$ rational curves whose
dual graph is

\begin{figure}[hbtp]
\centering
\includegraphics[scale=0.8]{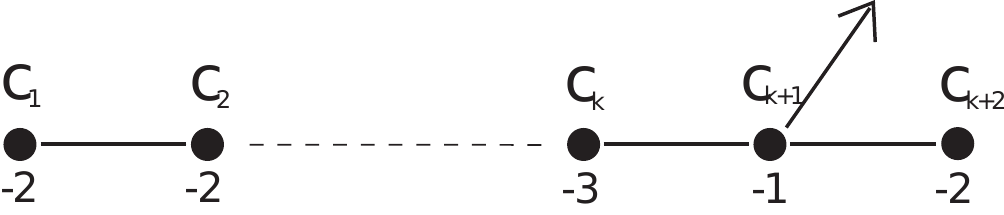}
\caption{The desingularization diagram of the foliation}
\label{graph}
\end{figure}

The vertices of this graph correspond to the irreducible components of the exceptional divisor $\mathcal{D}
= E_S^{-1}(0)$. The weight of each irreducible component corresponds to its self-intersection. In turn the edges
correspond to the intersection of two irreducible components and, finally, the arrow corresponds to the
intersection point of the (unique) component $C_{k+1}$ of self-intersection~$-1$ with the transform $\tilde{S}$ of $S$.
The component $C_{k+1}$ contains three singular points $s_0, s_1$ and $s_2$ where $s_0$ corresponds to the point determined
by the intersection of $C_{k+1}$ with $\tilde{S}$. Finally $s_1$ (resp. $s_2$) is the intersection point of $C_{k+1}$
with $C_{k+2}$ (resp. $C_k$).

Denote by $\tilf$ the transform of $\fol_{\om}$. Note that the holonomy associated to the component $C_{k+2}$,
i.e. the holonomy map associated to the regular leaf $C_{k+2} \setminus \{ s_1\}$ of $\tilf$, coincides with the identity
since this leaf is simply connected. It then follows that the germ of $\tilf$ at $s_1$ admits a holomorphic first integral.
Since the corresponding eigenvalues are $1, \, 2$ we conclude that the local holonomy map $g$ associated to a small loop around
$s_1$ {\it and contained in $C_{k+1}$}\, has order equal to~$2$. A similar discussion applies to the component $C_1$ and
leads to the conclusion that the local holonomy map $f$ associated to a small loop around
$s_2$ and contained in $C_{k+1}$ has order equal to~$k+1$. Since $C_{k+1} \setminus \{ s_0, s_1, s_2\}$ is a regular leaf
of $\tilf$, we conclude that the (image of the) holonomy representation of the fundamental group of
$C_{k+1} \setminus \{ s_0, s_1, s_2\}$ in $\diff$ is nothing but the group generated by $f,g$.
Note that this conclusion depends only on the configuration of the reduction tree. In turn
this configuration depends only on a jet of finite order of $\om$. Therefore every perturbation of the coefficients
of $\om$ affecting only those of sufficiently high order will give rise to
perturbations of the above indicated generators preserving their
fixed (finite) orders. Since every local diffeomorphism of finite order is conjugate to the corresponding rotation,
it follows that the mentioned perturbations are made inside the conjugacy classes of $f$ and $g$.
This also justifies the fact that in Theorems~A and~B, only perturbations of local diffeomorphisms that do not alter
the corresponding conjugation classes were considered.

Conversely given two local diffeomorphism $\tilde{f}, \tilde{g}$ of orders respectively $2, \, k+1$, these diffeomorphism
can be realized (up to simultaneous conjugation) as the holonomy of the corresponding component $C_{k+1}$ for some local foliation
$\fol_{\om}$ (or $\tilf$). This is done through a well-known gluing procedure for which precise references will be provided later.
Therefore the set of all foliations
$\fol_{\om}$, up to conjugation, can also be parameterized by the pair of elements $\tilde{f}$ an
 $h^{-1} \circ tilde{g} \circ h$ for some $h \in \diff$.

We can now state a sharper version of Theorem~C.

\begin{teo}\label{foliation}
Let $\om \in \Lambda_{(\C^2,0)}$ be a $1$-form with an isolated singularity at the origin and defining
a germ of a nilpotent foliation $\fol$ of type $A^{2k+1}$. Then for each $N \in \N$ there exists a $1$-form
$\om^{\prime} \in \Lambda_{(\C^2,0)}$ defining a germ of a foliation $\fol^{\prime}$ and satisfying the
following conditions:
\begin{itemize}
\item[(a)] $J_0^N \om^{\prime} = J_0^N \om$.

\item[(b)] $\fol$ and $\fol^{\prime}$ have $S$ as a common separatrix.

\item[(c)] there exists a fundamental system of open neighborhoods $\{U_n\}_{n \in \N}$ of $S$, inside a
closed ball $\bar{B}(0,R)$, such that for all $n \in \N$
\begin{itemize}
\item[(c1)] The leaves of the restriction of $\fol^{\prime}$ to $U_n \setminus S$, $\fol^{\prime}|_{(U_n \setminus S)}$
are simply connected except for a countable number of them.

\item[(c2)] all leaves of $\fol^{\prime}|_{(U_n \setminus S)}$ are incompressible, i.e. their fundamental groups inject in the
fundamental group of $U_n \setminus S$.

\item[(c3)] the morphism $\Pi_1(U_n \setminus S, .) \rightarrow \Pi_1(\bar{B}(0,R) \setminus S, .)$ induced by
the inclusion map is an isomorphism.
\end{itemize}
\end{itemize}
\end{teo}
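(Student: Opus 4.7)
The plan is to transfer the problem to the holonomy pseudogroup on the central component $C_{k+1}$ of the exceptional divisor, apply the generic perturbation result of Theorem~\ref{lastversion3.3} inside $\diff_N$, and translate the resulting algebraic and dynamical information back into statements about the topology of leaves.

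First, I would set up the dictionary between foliations of type $A^{2k+1}$ and pairs of finite-order diffeomorphisms as explained before the statement of the theorem. After the desingularization $\Pi_S$, the holonomy representation of $\pi_1(C_{k+1}\setminus\{s_0,s_1,s_2\})$ is generated by elements $f, g \in \diff$ of orders $k+1$ and $2$ respectively, and conversely any such pair is realised, up to simultaneous analytic conjugation, as the holonomy of an $A^{2k+1}$ foliation via a standard gluing construction. A conjugating element $h$ tangent to the identity to order $N$ at $0$ preserves the singularities and their linear parts, and therefore yields, by gluing, a $1$-form $\om'$ satisfying $J_0^N \om' = J_0^N \om$ and still having $S$ as its unique separatrix. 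This secures conditions~(a) and~(b).

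Second, I would apply Theorem~\ref{lastversion3.3} to $(f,g)$ in the Baire space $\diff_N$. This produces a $G_\delta$-dense set $\mathcal{U}_N \subset \diff_N$ and a fixed neighborhood $V \subset \C$ of $0$ such that, for every $h \in \mathcal{U}_N$, the pseudogroup $\Gamma^h$ generated on $V$ by $f$ and $h^{-1}\circ g \circ h$ is isomorphic to $\Z/(k+1)\Z \ast \Z/2\Z$ and enjoys the unique continuation property: any element coinciding with the identity on a nonempty open subset of its domain represents the identity of the abstract free product. Fix such an $h \in \mathcal{U}_N$ and let $\om'$ be the resulting $1$-form. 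For the topological conclusions I would then construct $\{U_n\}$ in the standard way: start from nested tubular neighborhoods $\tilde U_n$ of $\mathcal{D}$ in $\Pi_S^{-1}(\bar B(0,R))$ that are $\tilf'$-saturated away from the singular points and each deformation-retracts onto $\mathcal{D}$ through an ambient retraction; choose a fixed transversal $\Sigma$ to the generic leaf of $C_{k+1}\setminus\{s_0,s_1,s_2\}$ lying inside $V$; set $U_n = \Pi_S(\tilde U_n)$. Property~(c3) then follows from the retraction together with the fact that $\Pi_S$ is a biholomorphism away from the origin, while (c2) reduces to the observation that the holonomy map $\pi_1(U_n\setminus S,\cdot)\to\Gamma^h$ factors the fundamental group of every leaf through an isotropy subgroup of $\Gamma^h$, and that the representation is injective on the image by virtue of Step~2.

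Finally, (c1) is the main payoff of the unique continuation clause. A leaf of $\fol'|_{U_n \setminus S}$ fails to be simply connected precisely when the isotropy in $\Gamma^h$ of a corresponding point $p \in \Sigma$ is nontrivial; but every nontrivial element of $\Gamma^h$ has only isolated fixed points on $\Sigma$ (otherwise its restriction to a neighborhood of a non-isolated fixed point would agree with the identity, contradicting the unique continuation clause and the freeness of the product), and $\Z/(k+1)\Z \ast \Z/2\Z$ is countable. Hence only countably many leaves can be non-simply connected. The step I expect to be the main obstacle is the geometric construction of the system $\{U_n\}$ realising (c2) and (c3) simultaneously while keeping the transversal $\Sigma$ inside the ``good'' neighborhood $V$ provided by Theorem~\ref{lastversion3.3}; this is exactly the point where the general machinery of \cite{marinmattei}, cited by the authors, does the bulk of the work.
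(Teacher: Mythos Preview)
Your strategy is the paper's: pass to the holonomy pseudogroup on $C_{k+1}$, apply Theorem~\ref{lastversion3.3} inside $\diff_N$, and translate back to leaves. Two points, however, are handled differently from what you sketch.

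First, for the neighborhood system the paper does \emph{not} defer to the general machinery of \cite{marinmattei}. It exploits a feature specific to $A^{2k+1}$: the side pieces $C_{k+2}$ and $C_1\cup\cdots\cup C_k$ carry holomorphic first integrals (their holonomies are trivial or linearizable of finite order), and the paper uses these first integrals to build explicit neighborhoods $V_1,V_2$ in which every loop contained in a leaf bounds a disc (Lemma~\ref{applicationlemma2}). This is combined with a leafwise retraction onto the preimage of a $1$-complex $\Delta\subset C_{k+1}$ (Lemma~\ref{applicationlemma1}), so that any loop in a leaf of $\tilf'_{|V}$ is homotopic to one projecting to a word in $\gamma_1,\gamma_2$. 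The dichotomy is then: either the projected word lies in the subgroup generated by $\gamma_1^2,\gamma_2^{k+1}$, in which case item~(c) of Lemma~\ref{applicationlemma2} kills it, or it does not, in which case the pseudogroup element is non-trivial on every component of its domain and the base point is an isolated fixed point. This self-contained construction is precisely what replaces your appeal to \cite{marinmattei} and simultaneously delivers (c2) and (c3).

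Second, your argument for (a) is too quick. Tangency of $h$ to the identity at order $N$ does not by itself force $J_0^N\om'=J_0^N\om$: the gluing produces a foliation on the resolved surface, and one still has to show that blowing down yields a $1$-form with the prescribed jet. The paper obtains this by invoking the equireducible deformation constructions of \cite{lefloch} and \cite{matteisalem} (in particular the ``th\'eor\`eme de stabilit\'e~1.4.6'' of \cite{matteisalem}), which is the correct bridge between perturbing the holonomy and perturbing the $1$-form at a given jet order.
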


To begin with we are going to describe the construction of the system of neighborhoods $\{ U_n\}$. In fact,
we are going to construct a single neighborhood, denoted by $U$, that can be thought of as being $U_1$. The
remaining neighborhoods are obtained by the same procedure. Let us then fix a tubular neighborhood $\mathcal{T}$
of $C_{k+1}$ equipped with a locally trivial ($C^{\infty}$) fibration $\xi : \mathcal{T} \mapsto C_{k+1}$
whose fibers are discs and such that $C_k, \, C_{k+2}$ and the transform of $S$ are all contained in fibers
of $\xi$.

Given a path $\alpha : [0,1] \rightarrow C_{k+1}$, let $\vert \alpha \vert$ denote its image (or trace) in
$C_{k+1}$. Concerning the component $C_{k+1}$, we define the following:
\begin{itemize}
  \item Three simple loops $\delta_0, \, \delta_1, \, \delta_2$ about $s_0, s_1, s_2$ defining three closed
  discs $\overline{D}_0, \overline{D}_1, \overline{D}_2$ contained in $C_{k+1}$ and such that $\overline{D}_i \cap
  \overline{D}_j =\emptyset$ for $i \neq j$.
  \item Two disjoint simple paths $\sigma_1, \sigma_2$ in $C_{k+1} \setminus ( D_0 \cup D_1 \cup D_2)$
  such that $\sigma_1 (0), \sigma_2 (0)$ lie in $\delta_0$ and $\sigma_1 (1) = \delta_1 (0)$,
  $\sigma_2 (1) = \delta_2 (0)$. Here $D_i$ stands for the interior of $\overline{D}_i$.
  \item A base point $\tilde{s}$ in the simply connected open set
  $$
  C_{k+1}^{\ast} = C_{k+1} \setminus (\overline{D}_0 \cup \overline{D}_1 \cup \overline{D}_2
  \cup \vert \sigma_1 \vert \cup \vert \sigma_2 \vert) \, .
  $$
  \item A conformal disc $\Omega$ in $\xi^{-1} (\tilde{s})$ with center identified to $\tilde{s}$.
\end{itemize}
Figure~\ref{Figure2} summarizes these definitions. Modulo choosing $\Omega$ sufficiently small, its saturated
by the restriction of $\tilf$ to $\xi^{-1} (C_{k+1}^{\ast})$ is homeomorphic to the product $\Omega \times
C_{k+1}^{\ast}$ endowed with the horizontal foliation. Set $V_0$ to be the closure of
$\xi^{-1} (C_{k+1}^{\ast})$ in $\xi^{-1} (C_{k+1} \setminus (\overline{D}_0 \cup \overline{D}_1 \cup \overline{D}_2))$.

\begin{figure}[hbtp]
\centering
\includegraphics[scale=0.8]{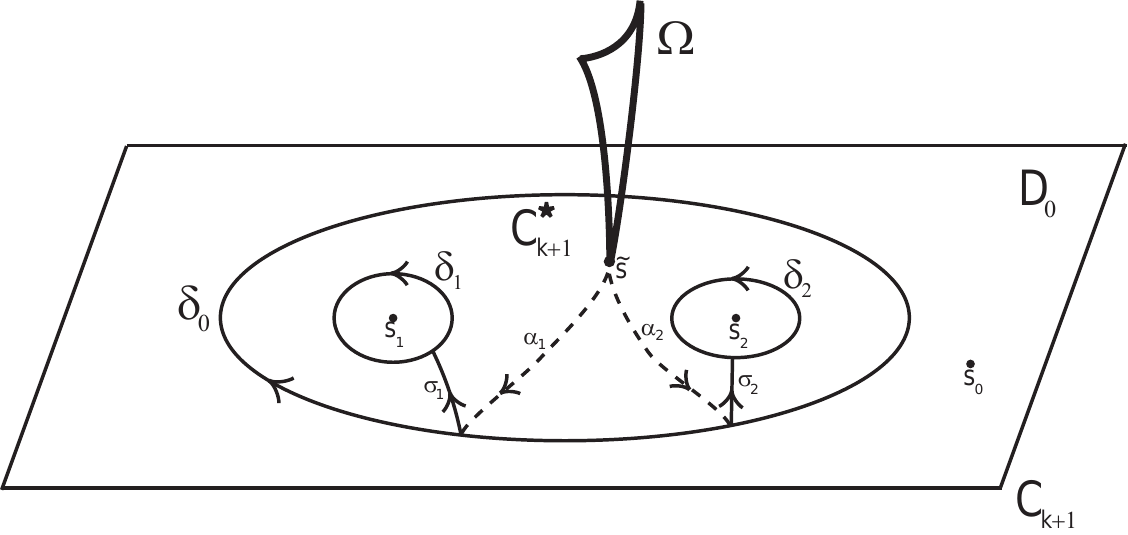}
\caption{}
\label{Figure2}
\end{figure}

The foliation induced on $\xi^{-1} (\delta_i)$, $i=1,2$, is the suspension over $\delta_i$ of the holonomy map
of $\tilf$ associated to the loop $\delta_i$ and realized at a transverse section $\Omega_i = \xi^{-1} (\delta_i (0))$.
Next fix two simple paths $\alpha_i : [0,1] \rightarrow C_{k+1}$, $i=1,2$, such that $\alpha_1 (0) = \alpha_2 (0) = \tilde{s}$
and $\alpha_i (1) = \sigma_i (0)$, cf. Figure~\ref{Figure2}. Set
$$
\gamma_i = \alpha_i \sigma_i \delta_i \sigma_i^{-1} \alpha_i^{-1} \; , \; \; i=1,2 \;\;\, {\rm and} \; \; \,
\Delta = \vert \gamma_1 \vert \cup \vert \gamma_2 \vert \, .
$$
Using a suitably chosen (real) vector field, we can construct a deformation retraction from
$C_{k+1} \setminus (\overline{D}_0 \cup \overline{D}_1 \cup \overline{D}_2)$ to $\Delta$. This deformation can naturally
be lifted to a retraction from $V_0$ to $V_0 \cap \xi^{-1} (\Delta)$ obtained through leafwise homotopy. Therefore we have:

\begin{lema}
\label{applicationlemma1}
The following holds:
\begin{enumerate}
\item Every loop with base point in $\Omega$ contained in a leaf of the restriction of $\tilf$ to $V_0$
is homotopic inside the same leaf to a loop contained in $\xi^{-1} (\Delta)$.

\item $\xi^{-1} (\Delta)$ is a real $3$-manifold with boundary and corners and the restriction $\tilf_{\vert \xi^{-1} (\Delta)}$
of $\tilf$ to $\xi^{-1} (\Delta)$ is a singular real foliation of (real) dimension~$1$. Every loop $c$, with base
point in $\Omega$, contained in a leaf of this real foliation projects onto a loop contained in $\Delta=\vert \gamma_1 \vert \cup \vert
\gamma_2 \vert$, with base point $\tilde{s}$, defining (through homotopy) a word
$$
W(f,g) = F_{1} \circ  \cdots \circ F_r, \, \; \; F_i \in \{ f^{\pm j}, g^{\pm j} \},
$$
in the pseudogroup generated by $f,g$ on $\Omega$ (with the appropriate identifications).\qed
\end{enumerate}
\end{lema}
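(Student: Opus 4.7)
The plan is to combine a deformation retraction in the base $C_{k+1}\setminus(\overline{D}_0\cup\overline{D}_1\cup\overline{D}_2)$ onto $\Delta$ with its leafwise lift through $\tilf$, exploiting the fact that $\xi$ is transverse to $\tilf$ over $V_0$.

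First I would record the transverse structure of $\tilf_{\mid V_0}$. After shrinking $\mathcal{T}$ and $\Omega$, the disc fibration $\xi$ is transverse to $\tilf$ away from $s_0,s_1,s_2$. Over $\xi^{-1}(C_{k+1}^{\ast})$ the foliation is conjugate to the trivial horizontal foliation on $\Omega\times C_{k+1}^{\ast}$; over a neighborhood $\xi^{-1}(\delta_i)$, $i=1,2$, it is the suspension of the local holonomy $g$ (resp.\ $f$) of $\tilf$ around $s_1$ (resp.\ $s_2$). These models glue along the fibres over $|\sigma_i|$ to a holomorphic foliation on $V_0$ transverse to $\xi$. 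The crucial consequence is that $\xi$ restricted to each leaf $L$ of $\tilf_{\mid V_0}$ is a covering map onto its image in $C_{k+1}\setminus(\overline{D}_0\cup\overline{D}_1\cup\overline{D}_2)$.

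Next I would prove part~(1). Both $C_{k+1}\setminus(\overline{D}_0\cup\overline{D}_1\cup\overline{D}_2)$ and $\Delta$ are homotopy equivalent to a wedge of two circles (for the latter, $\gamma_1$ and $\gamma_2$ share only the base point $\tilde{s}$), and the inclusion $\Delta\hookrightarrow C_{k+1}\setminus(\overline{D}_0\cup\overline{D}_1\cup\overline{D}_2)$ induces an isomorphism on $\pi_1$, sending the natural generators $[\gamma_1],[\gamma_2]$ to themselves. Hence there exists a deformation retraction $r_t:C_{k+1}\setminus(\overline{D}_0\cup\overline{D}_1\cup\overline{D}_2)\to\Delta$, with $r_0=\mathrm{id}$ and $r_1$ landing in $\Delta$, which may be built by following a suitable real gradient-like vector field. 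Given a loop $c$ in a leaf $L$ of $\tilf_{\mid V_0}$ based at a point of $\Omega$, the composition $r_t\circ(\xi\circ c)$ lifts uniquely through the covering $\xi_{\mid L}:L\to C_{k+1}\setminus(\overline{D}_0\cup\overline{D}_1\cup\overline{D}_2)$ to a homotopy in $L$ starting at $c$ and ending at a loop inside $L\cap\xi^{-1}(\Delta)$, proving~(1).

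For part~(2), since $\Delta=|\gamma_1|\cup|\gamma_2|$ is a real one-complex with a single wedge point at $\tilde{s}$ and $\xi$ is a smooth disc fibration, $\xi^{-1}(\Delta)$ is a real $3$-manifold with boundary and corners (the corners appearing over the wedge point and at $\partial\mathcal{T}\cap\xi^{-1}(\Delta)$). Intersecting the real $2$-dimensional foliation $\tilf$ with this $3$-manifold produces, by transversality, a singular real foliation of dimension~$1$. A loop in one of its leaves based in $\Omega$ projects under $\xi$ to a closed curve in $\Delta$ based at $\tilde{s}$, which by the structure of $\Delta$ is homotopic rel.\ $\tilde{s}$ to a concatenation $\gamma_{i_1}^{\varepsilon_1}\cdots\gamma_{i_r}^{\varepsilon_r}$. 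Each traversal of $\gamma_i$ acts on $\Omega$ by the local holonomy $f$ or $g$, so the corresponding pseudogroup element is the word $W(f,g)=F_1\circ\cdots\circ F_r$.

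The principal obstacle is the global consistency of the leafwise lift of $r_t$ in Step~2: one must check that the unique local lift to $L$ extends to all of $[0,1]$ and stays inside $V_0$, rather than running off into $\xi^{-1}(\overline{D}_j)$ or through $\partial\mathcal{T}$. Both issues are handled by the compactness of the track of $r_t$ in $C_{k+1}\setminus(\overline{D}_0\cup\overline{D}_1\cup\overline{D}_2)$ together with the covering property of $\xi_{\mid L}$ established in the first step; the rest of the proof is a standard homotopy-lifting argument.
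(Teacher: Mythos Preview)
Your proposal is correct and follows essentially the same approach as the paper: the paper's entire proof is the sentence preceding the lemma, where a deformation retraction of $C_{k+1}\setminus(\overline{D}_0\cup\overline{D}_1\cup\overline{D}_2)$ onto $\Delta$ is built via a real vector field and then lifted leafwise to $V_0$, after which the lemma is stated with a \qed. You have supplied the details of exactly this argument, including the transversality of $\xi$ to $\tilf$ that makes the leafwise lift work and the identification of the holonomy word in part~(2).
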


In fact, concerning item~(2) above, it is clear that the base point $c(0)$ of the loop $c$ belongs to some connected
component of the domain of definition of the element $W(f,g) = F_{1} \circ  \cdots \circ F_r$, where $F_i$ is as above.
Here $W(f,g) = F_{1} \circ  \cdots \circ F_r$ is viewed
as belonging to the pseudogroup generated on $\Omega$ by $f,g$ again with the natural identifications.

So far we have described $V_0$ that accounts for a neighborhood of $C_{k+1}^{\ast}$. The next step is to define a neighborhood
$V_1$ of $C_{k+2}$ and a neighborhood $V_2$ of the divisor constituted by the string of rational curves
going from $C_1$ to $C_k$ so as to satisfy the conditions of the lemma below.

\begin{lema}
\label{applicationlemma2}
There exists neighborhoods $V_1$ and $V_2$ as above such that $V = V_0 \cup V_1 \cup V_2$ fulfils
the following conditions:
\begin{description}
  \item[a] The union of $V$ with a neighborhood of $s_0$ provides a neighborhood of the (total) exceptional divisor.
   Besides this neighborhood coincides with the saturated of $\Omega$ by $\tilf$.
  \item[b] Every loop with base point in $\Omega$ and contained in a leaf $L$ of $\tilf_{\vert V}$ (the restriction of
  $\tilf$ to $V$) is homotopic in $L$ to a loop contained in $V_0$.
  \item[c] Every loop $\Lambda$ contained in a leaf of $\tilf_{\vert \xi^{-1} (\Delta)}$ projecting onto a loop that is a power
  of either $\gamma_1^2$ or $\gamma_2^{k+1}$ in homotopically trivial in the leaf of $\tilf_V$ containing $\Lambda$.
\end{description}
\end{lema}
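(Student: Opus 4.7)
The plan is to exploit the fact that every singular point of $\tilf$ in the reduction tree, apart from the non-reduced origin $s_0$, is a reduced Siegel singularity with strictly positive rational ratio of eigenvalues. Indeed, the chain $C_1,\ldots,C_k$ together with the isolated component $C_{k+2}$ carries simply connected smooth leaves at its free ends, and propagating this information through the tree forces each intermediate intersection to have periodic holonomy. By Mattei--Moussu in the periodic case, every such singularity then admits a local holomorphic first integral of the form $x^{a}y^{b}$ with $a,b$ coprime positive integers. Using these local models, I construct $V_1$ as a small fibered tubular neighborhood of $C_{k+2}$, capped at $s_1$ by the polydisc $\{|x|<\varepsilon_{1},|y|<\varepsilon_{1}\}$ in coordinates where $\tilf$ is given by $d(xy^{2})=0$, and glued to $V_0$ along $\xi^{-1}(\partial D_{1})$. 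I construct $V_2$ analogously as a fibered tubular neighborhood of $C_{1}\cup\cdots\cup C_{k}$, capped at every intermediate singularity by the polydisc of its local first-integral model and glued to $V_0$ along $\xi^{-1}(\partial D_{2})$. Condition (a) then follows from the flow-box structure along $C_{k+1}^{\ast}$ together with the fact that, in each local polydisc model just described, every leaf meets a fibre of $\xi$ and hence, by transport along the divisor, meets $\Omega$.

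Next I address (b). Given a leaf $L$ of $\tilf|_{V}$ and a loop $\gamma\subset L$ based in $\Omega$, I partition its parameter interval into finitely many sub-arcs each contained in exactly one of $V_0,V_1,V_2$; it suffices to show that every sub-arc lying in $V_1$ or $V_2$ can be pushed, through a leafwise homotopy, onto the interface with $V_0$. Inside $V_1$ the intersection $L\cap V_1$ is either the simply connected curve $C_{k+2}\setminus\{s_{1}\}$ or a transverse connected piece identified in the local first-integral chart with a level set $\{xy^{2}=c\}$ truncated to the polydisc; in both cases it deformation retracts onto its boundary with $V_0$ along the fibres of $\xi$ extended through $s_1$. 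The same argument, repeated inductively at each node of the chain $C_{1},\ldots,C_{k}$, handles every sub-arc in $V_2$. Concatenating the resulting retractions produces a leafwise homotopy from $\gamma$ to a loop in $V_0$.

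The main obstacle is (c). Consider a loop $\Lambda\subset L\cap\xi^{-1}(\Delta)$ projecting onto $\gamma_{1}^{2}$. In the chart at $s_1$ where $\tilf$ is defined by $d(xy^{2})=0$, $\gamma_{1}^{2}$ lifts to $\Lambda$ as a closed curve in the leaf $\{xy^{2}=c\}$ that wraps twice around the $y$-axis; this leaf is a $2{:}1$ branched cover of the $y$-disc branched at $y=0$, so $\Lambda$ bounds the preimage of a small $y$-disc, i.e.\ a disc inside the leaf contained in $V_1$. The argument for $\gamma_{2}^{k+1}$ proceeds on the same principle at $s_2$, where the local first integral has the form $x^{k+1}y$, giving a $(k+1){:}1$ branched cover and a disc cap. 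The genuine difficulty is here: to make the latter argument rigorous I must verify that the disc cap produced from the local chart at $s_2$ extends through the entire chain $C_{1},\ldots,C_{k}$ inside $V_2$ without picking up extra non-trivial loops. This is done by an inductive argument along the chain, checking at each node that the periodic holonomy encountered is compatible with the branched-cover structure fixed at $s_2$, which in turn follows from the combinatorics of the weights in the dual graph. Once this compatibility is established, $\Lambda$ bounds an embedded disc in the leaf of $\tilf_{V}$, and (c) follows.
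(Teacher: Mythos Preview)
Your overall architecture is sound for (a) and (b), but the argument for (c) contains a concrete error, and fixing it naturally leads to the device the paper uses in place of your local-model approach.

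In the chart at $s_1$ with $C_{k+1}=\{y=0\}$, $C_{k+2}=\{x=0\}$ and first integral $xy^2$, the leaf $\{xy^2=c\}$ restricted to the bidisc $\{|x|<\varepsilon,\ |y|<\varepsilon\}$ is \emph{not} a branched cover of the $y$-disc: the projection to $y$ is one-to-one (since $x=c/y^{2}$) onto the annulus $\sqrt{|c|/\varepsilon}<|y|<\varepsilon$. The loop $\Lambda$ lifting $\gamma_1^{2}$ corresponds to $y$ winding once around this annulus and is therefore a generator of its fundamental group, hence \emph{not} null-homotopic in the local piece of the leaf. The disc capping $\Lambda$ exists only after following the leaf along all of $C_{k+2}$; it is not visible in any polydisc centred at $s_1$. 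The same issue arises, compounded along $k$ nodes, for $\gamma_2^{k+1}$ and the chain $C_1,\ldots,C_k$: your ``inductive compatibility check'' must in effect show that the leaf in $V_2$ is globally a disc, and the sketch does not establish this.

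The paper avoids the node-by-node gluing entirely. Because the holonomy of the regular part of $C_{k+2}$ (resp.\ of the chain $C_1,\ldots,C_k$) is trivial, there is a \emph{single} holomorphic first integral $F^{k+2}$ (resp.\ $F^{0,k}$) on a full neighbourhood of that piece of the divisor, sending the divisor to $0$ and giving a locally trivial fibration with disc fibres over a punctured neighbourhood of $0$. One then \emph{defines} $V_1$ (resp.\ $V_2$) as the preimage under this first integral of those values $c$ for which the fibre over $c$ meets $\xi^{-1}(|\delta_1|)$ (resp.\ $\xi^{-1}(|\delta_2|)$) in a circle. Each connected component of $L\cap V_i$ is then an entire fibre, i.e.\ a disc, with boundary circle lying in $V_0$. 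With this construction (b) is simply the retraction of each such disc onto its boundary, and (c) is immediate: $\Lambda$ is precisely one of these boundary circles and bounds the corresponding fibre-disc. No induction along the chain is needed.
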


\begin{proof}
To construct $V_1$ (resp. $V_2$) first note that every connected component of the intersection between a leaf of
$\tilf_{\vert V_0}$ and $\xi^{-1} (\vert \delta_1 \vert)$ (resp. $\xi^{-1} (\vert \delta_2 \vert)$) is either a segment
(i.e. a simple open path) or a circle. In the latter case, this circle bounds a disc in the corresponding leaf. In the case of
$\delta_1$ this is particularly immediate since $s_1$ is the unique singularity of $\tilf$ in $C_{k+2}$ so that the holonomy
associated to its regular part is trivial. A similar argument applies to $\delta_2$ and to the component $C_1$. As already
mentioned this forces all singularities of $\tilf$ lying in some of the components $C_1 ,\ldots ,C_k$ to be linearizable.
In particular the restriction of $\tilf$ to some neighborhood $V_{k+2}'$ (resp. $V_{0,k}'$) of $C_{k+2}$ (resp. of the divisor
consisting of the components $C_1, \ldots , C_k$) possesses a non-constant holomorphic first integral $F^{k+2}$
(resp. $F^{0,k}$). Indeed, $F^{k+2}$ maps $V_{k+2}'$ to a neighborhood $B_{k+2}$ of $0\in \C$ sending $C_{k+2}$ to $0 \in \C$.
Similarly $F^{0,k}$ maps $V_{0,k}'$ to a neighborhood $B_{0,k}$ of $0\in \C$ and takes the divisor consisting of the components
$C_0, \ldots , C_k$ to $0 \in \C$. Thus $F^{k+2}$ (resp. $F^{0,k}$) defines a locally trivial fibration over the
corresponding punctured neighborhood of $0 \in \C$. Furthermore it is immediate to check that the fibers of both fibrations
are discs. This establishes the claim.

Now we shall define $V_1$ (resp. $V_2$) as the union of all the discs bounded by the above mentioned circles.
In fact, we set $V_1  = (F^{k+2})^{-1} (K_{k+2})$ where
$$
K_{k+2} = \{ z \in B_{k+2} \setminus \{0 \} \ \, ; \, \; (F^{k+2})^{-1} (z) \cap \xi^{-1} (\vert \delta_1 \vert)
\; \; \, {\rm is \; \;  a \; \; circle}  \; \} \cup  \{ 0\} \, .
$$
Similarly $V_2 =(F^{0,k})^{-1} (K_{0,k})$ where
$$
K_{0,k} = \{ z \in B_{0,k} \setminus \{0 \} \ \, ; \, \; (F^{0,k})^{-1} (z) \cap \xi^{-1} (\vert \delta_2 \vert)
\; \; \, {\rm is  \; \;  a \; \;  circle}  \; \} \cup  \{ 0\} \, .
$$
It is now clear that $V = V_0 \cup V_1 \cup V_2$ satisfies the conditions (a), (b) and (c) in the statement.
\end{proof}

\begin{proof}[Proof of Theorem~\ref{foliation}]
Consider a reduced non-trivial word $W (f,g)$ in $f,g$ and their inverses. Here is regarded as an element
of the pseudogroup generated by $f,g$ on $\Omega$. Modulo reducing $\Omega$ we suppose that, if $W(f,g)$ as
above coincides with the identity on some connected component of its domain of definition (itself contained in
$\Omega$), then $W (f,g)$ can be written as a word in $g^2$, $f^{k+1}$ and their inverses. According to Theorem~B,
this can always be achieved by conjugating $g$ by some element $h$. Furthermore $h$ can also be supposed arbitrarily
close to the identity in the analytic topology and, in view of Theorem~\ref{lastversion3.3}, $h$ can also be supposed
tangent to the identity to a given fixed order $N$.

Next consider a loop $\Lambda$ in a leaf $L$ of $\tilf_{\vert V}$ with $V$ as in Lemma~\ref{applicationlemma2}. The item~(a) of
this lemma allows us to assume that $\Lambda (0) =\Lambda (1) \in \Omega$. Furthermore, by virtue of item~1 of
Lemma~\ref{applicationlemma1} and of item~(b) of Lemma~\ref{applicationlemma2}, $\Lambda$ is homotopic in $L$
to a loop $\Lambda'$ contained in $\xi^{-1} (\Delta)$. Now we have:
\begin{itemize}
  \item  If $\xi \circ \Lambda'$ is homotopic to an element (loop) contained in the subgroup of the fundamental group
  of $C_{k+1} \setminus \{ s_0, s_1 ,s_2 \}$ generated by $\gamma_1^2, \gamma_2^{k+1}$. Then
  item~(c) of Lemma~\ref{applicationlemma2} guarantees that $\Lambda'$, and thus $\Lambda$ itself, is homotopically trivial in $L$.

  \item If $\xi \circ \Lambda'$ is not homotopic to an element (loop) contained in the subgroup of the fundamental group
  of $C_{k+1} \setminus \{ s_0, s_1 ,s_2 \}$ generated by $\gamma_1^2, \gamma_2^{k+1}$. Then the holonomy map associated
  to $\Lambda'$ does not coincide with the identity on any connected component of its domain of definition in $\Omega$.
  Therefore the leaves that are not simply connected intersect $\Omega$ at that, necessarily isolated, fixed points by
  some non-trivial element of the pseudogroup generated on $\Omega$ by $f,g$. This set however is clearly countable.
\end{itemize}

The preceding discussion accounts for almost all of the statement of Theorem~\ref{foliation}. It remains only to verify
item~(a). This means that we are given a $1$-form $\omega$ representing a foliation $\fol$ with singularity of type
$A^{2k+1}$. As it was seen the fact that $\fol$ belongs to the class $A^{2k+1}$ is determined by the reduction of the separatrix
and hence by a finite jet of $\omega$. Fix $N \in \N$ large enough to imply that every $1$-form $\om^{\prime}$
that is tangent to $\om$ at $0 \in \C$ to order $N$ automatically defines a foliation $\fol^{\prime}$ in $A^{2k+1}$. We need
to show that we can obtain $\fol^{\prime}$ leading to a foliation $\tilf^{\prime}$ whose holonomy group associated to
the component $C_{k+1}$ satisfies the generic conditions of Theorem~B. To obtain $\om^{\prime}$ it suffices to construct an
``equireducible'' deformation of $\fol$ whose holonomy groups associated to the component $C_{k+1}$ is as in
Theorem~\ref{lastversion3.3} i.e. generic in the sense of Theorem~B and obtained from the corresponding group of the initial
$\fol$ by conjugating ``$g$'' by a local diffeomorphism $h$ tangent to the identity at $0\in \C$ to order $M$. Once we have
chosen $h$ as indicated, the construction of $\om^{\prime}, \, \fol^{\prime}$ is carried out by resuming word-by-word the
constructions presented in \cite{lefloch} or in \cite{matteisalem} which, in turn, are based on the ``th\'eor\`eme de stabilit\'e~1.4.6''
from \cite{matteisalem}. This ends the proof of Theorem~\ref{foliation}.
\end{proof}

\bigskip

\begin{flushleft}
{\sc Jean-Fran\c{c}ois Mattei \,  \&  \, Julio Rebelo} \\
Institut de Math\'ematiques de Toulouse\\
118 Route de Narbonne\\
F-31062 Toulouse, FRANCE.\\
mattei@math.univ-toulouse.fr \\
rebelo@math.univ-toulouse.fr

\end{flushleft}

\bigskip

\begin{flushleft}
{\sc Helena Reis} \\
Centro de Matem\'atica da Universidade do Porto, \\
Faculdade de Economia da Universidade do Porto, \\
Portugal\\
hreis@fep.up.pt \\

\end{flushleft}


\begin{thebibliography}{Dillo 83}

\bibitem[BLL-1]{Frank1} {\sc M, Belliart, I. Liousse \& F. Loray}, Sur l'existence de points fixes attractifs pour les sous-groupes de ${\rm Aut}\, (\C, 0)$,
{\it C. R. Acad. Sci. Paris}, {\bf 324}, S\'er. {\bf I}, (1997), 443-446.

\bibitem[BLL-2]{Morefrank} {\sc M, Belliart, I. Liousse \& F. Loray}, The generic differential equation $dw/dz = P_n (w,z) /Q_n (w,z)$ on
$\C P(2)$ carries no interesting transverse structure, {\it Ergod. Th. \& Dynam. Sys.}, {\bf 21}, (2001), 1599-1607.

\bibitem[Du]{duren} {\sc P. Duren}, {\it Univalent Functions}, Springer-Verlag New York Inc. (1983).

\bibitem[G-T]{who} {\sc D. Gilbarg \& N.S. Trudinger}, {\it Elliptic Partial Differential Equations of Second Order}, Springer-Verlag,
Berlin, (1998).

\bibitem[Il-P]{ilya} {\sc Yu, Il'yashenko \& A.S. Pyartli}, The monodromy group at infinity of a generic polynomial vector field
on the complex projective plane, {\it Russian J. of Math. Phys.}, {\bf 2}, 3, (1994), 275-315.

\bibitem[LF]{lefloch} {\sc L. Le Floch}, Rigidit\'e g\'en\'erique des feuilletages {\it Ann. Sc. de l'ENS, S\'er. 4}, {\bf 31}, 6, (1998), 765-785.

\bibitem[Lo]{rennes1} {\sc F. Loray}, A preparation theorem for codimension-one foliations, {\it Annals of Math.}, {\bf 163}, (2006), 709-722.

\bibitem[M-M]{marinmattei} {\sc D. Marin \& J.-F. Mattei}, Incompressibilit\'e des feuilles de germes de feuilletages holomorphes singuliers,
{\it Ann. Scient. de l'ENS, 4-S\'erie}, {\bf 41}, (2008), 855-903.

\bibitem[M-S]{matteisalem} {\sc J.-F. Mattei \& E. Salem}, Modules formels locaux de feuilletages
holomorphes, available from arXiv:math/0402256v1, (2004), 89 pages.


\bibitem[Re]{julio} {\sc J. Rebelo}, On transverse rigidity for singular foliations in $(\C^2, 0)$,
{\it Ergod. Th. \& Dynam. Sys.}, {\bf 31}, (2011), 935-950.

\bibitem[Sh]{scherba} {\sc A. Shcherbakov}, On the density of an orbit of a pseudogroup of conformal mappings and a generalization of the
Hudai-Verenov theorem, {\it Vestinik Movskovskogo Universiteta Mathematika}, {\bf 31}, 4, (1982), 10-15.

\bibitem[S-Z]{poland} {\sc E. Str\'ozyna \& H. Zoladek}, The analytic and formal normal form for the nilpotent singularity, {\it J. Differential
Equations}, {\bf 179}, (2002), 479-537.

\bibitem[T]{takens} {\sc F. Takens}, A nonstabilizable jet of a singularity of a vector field: the analytic case,
{\it Algebraic and differential topology - global differential geometry}, Teubner-Text Math., {\bf 70}, Teubner, Leipzig, (1984), 288-305.






\end{thebibliography}
\end{document}